\newtheorem{theorem}{Theorem}
\newtheorem{lemma}[theorem]{Lemma}
\newtheorem{remark}[theorem]{Remark}
\newtheorem{corollary}[theorem]{Corollary}
\newtheorem{proposition}[theorem]{Proposition}
\newtheorem{example}[theorem]{Example}
\begin{document}
\title{Symmetric quasi-hereditary envelopes}
\author{Volodymyr Mazorchuk and Vanessa Miemietz}
\date{\today}

\maketitle

\begin{abstract}
We show how any finite-dimensional algebra can be realized as
an idempotent subquotient of  some symmetric  quasi-hereditary algebra.
In the special case of rigid symmetric algebras we show that they 
can be realized as centralizer  subalgebras of symmetric 
quasi-hereditary algebras. We also show that the infinite-dimensional
symmetric quasi-hereditary algebras we construct admit quasi-hereditary
structure with respect to two opposite orders, that they have 
strong exact Borel and $\Delta$-subalgebras and the corresponding 
triangular decompositions.
\end{abstract}

\section{Introduction}\label{s1}

The classical result of Dlab and Ringel (see \cite{DR}) says that 
every finite-dimensional algebra can be realized as a centralizer 
subalgebra of  some quasi-hereditary algebra. Motivated by the discovery 
of  (infinite-dimensional) symmetric quasi-hereditary algebras in
\cite{Pe} (see also \cite{CT,MT1,MT2,BS2}) we address the question 
whether every symmetric finite-dimensional algebra can be realized 
as a centralizer  subalgebra of some symmetric quasi-hereditary algebra
(we will loosely call the latter algebra a symmetric quasi-hereditary envelope
of the original algebra, although it is not uniquely defined in any 
reasonable sense). Unless the original algebra is semisimple,
any symmetric quasi-hereditary envelope must be infinite-dimensional. 

In the present paper we generalize the construction from \cite{DR}
and produce quasi-hereditary envelopes of finite-dimensional algebras.
Under some mild natural restrictions on the original algebra, 
quasi-hereditary envelopes of symmetric algebras turn out to be symmetric. 
In particular, we show that every symmetric and rigid algebra can be 
realized as a centralizer  subalgebra of some symmetric quasi-hereditary 
algebra. Furthermore, we show that every finite-dimensional algebra can
be realized as an idempotent subquotient of  some symmetric 
quasi-hereditary algebra. In particular, this gives many new
examples of symmetric quasi-hereditary algebras.

The infinite-dimensional (symmetric) quasi-hereditary algebras, which
we construct, have many interesting properties. To start with, all these
algebras are quasi-hereditary with respect to two natural orders
(one of them being the opposite of the other one). The standard and
costandard modules for these structures have a natural description
in terms of the original algebra. We also show that all these algebras 
have $\Delta$-subalgebras in the sense of K{\"o}nig (\cite{Ko,Ko2}).  
Assuming that the original algebra is graded, we show that our algebras
have a strong exact Borel subalgebra in the sense of K{\"o}nig
(\cite{Ko,Ko2}), as well as the corresponding triangular decompositions.

The paper is organized as follows: In Section~\ref{s2} we extend 
the construction from \cite{DR} to produce quasi-hereditary envelopes
of finite-dimensional algebra and show that these envelops are 
quasi-hereditary with respect to two natural opposite orders.
Finite-dimensional algebras are realized as centralizer subalgebras 
of their quasi-hereditary envelopes. In Section~\ref{s3} we prove that for 
symmetric rigid  finite-dimensional algebras the quasi-hereditary envelopes,
constructed in Section~\ref{s2}, are symmetric as well. For arbitrary algebras
we show how the construction can be generalized to realize 
every finite-dimensional algebra as an idempotent subquotient of  
some symmetric  quasi-hereditary algebra. In Section~\ref{s4} we describe
strong exact Borel and $\Delta$-subalgebras and the corresponding 
triangular decompositions for our infinite-dimensional
quasi-hereditary algebras. Finally, in Section~\ref{s5} we discuss some examples, in particular, those coming from Schur algebras
and the BGG category $\mathcal{O}$.

\vspace{0.5cm}

\noindent
{\bf Acknowledgments.} The first author was partially supported
by the Swedish Research Council. We thank Serge Ovsienko for many 
stimulating and helpful discussions.

\section{Preliminaries}\label{s2}

Let $\mathbb{N}$ denote the set of positive integers, $\Bbbk$ be an
algebraically closed  field  and $\mathcal{A}$ be a basic 
$\Bbbk$-linear category with at most countably many objects and 
finite-dimensional projectives and injectives (see \cite{MOS} for details).
We will often loosely call such categories ``algebras'' (as they can be
realized using infinite-dimensional associative quiver algebras which do not
have a unit element in the general case) and use for them standard 
matrix notation with infinite matrices. 
For $x\in \mathcal{A}$ we denote by $\mathtt{e}_x$ 
the identity element in $\mathcal{A}(x,x)$.
 
Assume that for some $N\in \mathbb{N}$ we have a (fixed) finite filtration of 
$\mathcal A$ by two-sided ideals as follows:
\begin{equation}\label{eq1}
\mathcal A=\mathcal I_0 \supsetneq \mathcal I_1\supsetneq \mathcal 
I_2\supsetneq \dots \supsetneq\mathcal I_N=0. 
\end{equation}
Assume further that $\mathcal I_i\mathcal I_j\subset \mathcal I_{i+j}$
and that $\mathcal I_i/\mathcal I_{i+1}$ 
are semi-simple as $\mathcal A$-bimodules.

Consider the new category $\mathfrak{A}$, whose objects are
$x[i]$, $x\in \mathcal{A}$, $i\in\mathbb{Z}$. For $x,y\in \mathcal{A}$
and $i,j\in \mathbb{Z}$ set $\mathfrak{A}(x[i],y[j])=\mathcal{A}(x,y)$.
Then the multiplication  in $\mathcal{A}$ induces a multiplication in
$\mathfrak{A}$, which makes $\mathfrak{A}$ into a category. The category
$\mathfrak{A}$ comes together with the natural action of $\mathbb{Z}$ 
by autoequivalences via shifts $[i]$, $i\in\mathbb{Z}$ (here $[1]$ means 
``shift by one to the right'').  The category $\mathfrak{A}$ can be seen 
as a $\mathbb{Z}$-Morita-equivalent extension of $\mathcal{A}$ 
(every object in $\mathcal{A}$ is repeated $|\mathbb{Z}|$ times). 
We shall think of $\mathfrak{A}$ also as of infinite matrices of the form
\begin{displaymath}
\left( 
\begin{array}{cccccc}
\ddots & \vdots & \vdots & \vdots & \vdots & \ddots\\
\dots & \mathcal{A} & \mathcal{A} & \mathcal{A} & \mathcal{A} & \dots\\
\dots & \mathcal{A} & \mathcal{A} & \mathcal{A} & \mathcal{A} & \dots\\
\dots & \mathcal{A} & \mathcal{A} & \mathcal{A} & \mathcal{A} & \dots\\
\dots & \mathcal{A} & \mathcal{A} & \mathcal{A} & \mathcal{A} & \dots\\
\ddots & \vdots & \vdots & \vdots & \vdots & \ddots\\
\end{array}
\right).
\end{displaymath}

Denote by $\mathfrak{B}$ the subcategory of $\mathfrak{A}$, which contains
all objects but only the following morphisms: For $x,y\in \mathcal{A}$
and $i,j\in \mathbb{Z}$ set
\begin{displaymath}
\mathfrak{B}(x[i],y[j])= 
\begin{cases}
\mathfrak{A}(x[i],y[j]), & i\geq j;\\
\mathcal I_{j-i}(x,y), & \text{otherwise}.
\end{cases}
\end{displaymath}
One can think of $\mathfrak{B}$ also as of
infinite matrices of the form
\begin{displaymath}
\left( 
\begin{array}{cccccc}
\ddots & \vdots & \vdots & \vdots & \vdots & \ddots\\
\dots & \mathcal{A} & \mathcal{A} & \mathcal{A} & \mathcal{A} & \dots\\
\dots & \mathcal{I}_1 & 
\mathcal{A} & \mathcal{A} & \mathcal{A} & \dots\\
\dots & \mathcal{I}_2 & \mathcal{I}_1 & 
\mathcal{A} & \mathcal{A} & \dots\\
\dots & \mathcal{I}_3 & \mathcal{I}_2 & 
\mathcal{I}_1 & \mathcal{A} & \dots\\
\ddots & \vdots & \vdots & \vdots & \vdots & \ddots\\
\end{array}
\right).
\end{displaymath}
Consider the subset  $\mathfrak{I}$ of $\mathfrak{B}$ with the same
set of objects and morphisms given by
\begin{displaymath}
\mathfrak{I}(x[i],y[j])= 
\begin{cases}
\mathcal I_{N-(i-j)}(x,y), & i-N<j<i;\\
\mathfrak{B}(x,y), & j\leq i-N;\\
0, & \text{otherwise}.
\end{cases}
\end{displaymath}
The set $\mathfrak{I}$ is not a subcategory as it does not contain
identity morphisms on objects. One can think of $\mathfrak{I}$ also as of
infinite matrices of the form
\begin{displaymath}
\left( 
\begin{array}{cccccc}
\ddots & \vdots & \vdots & \vdots & \vdots & \ddots\\
\dots & 0 & \mathcal{I}_{N-1} & \mathcal{I}_{N-2} 
& \mathcal{I}_{N-3} & \dots\\
\dots & 0 &  0 & \mathcal{I}_{N-1} & 
\mathcal{I}_{N-2} & \dots\\
\dots & 0 & 0 & 0 & \mathcal{I}_{N-1} & \dots\\
\dots & 0 & 0 &  0 & 0 & \dots\\
\ddots & \vdots & \vdots & \vdots & \vdots & \ddots\\
\end{array}
\right)
\end{displaymath}
It is easy to see that $\mathfrak{I}$ is an ideal of 
$\mathfrak{B}$. 
Define the category $\mathfrak{C}=\mathfrak{C}(\mathcal{A})=
\mathfrak{B}/\mathfrak{I}$. One can think of $\mathfrak{C}$ as of
infinite matrices of the form
\begin{displaymath}
\left( 
\begin{array}{cccccc}
\ddots & \vdots & \vdots & \vdots & \vdots & \ddots\\
\dots & \mathcal{A} & \mathcal{A}/\mathcal{I}_{N-1} & 
\mathcal{A}/\mathcal{I}_{N-2} & 
\mathcal{A}/\mathcal{I}_{N-3} & \dots\\
\dots & \mathcal{I}_1 & 
\mathcal{A} & \mathcal{A}/\mathcal{I}_{N-1} & 
\mathcal{A}/\mathcal{I}_{N-2} & \dots\\
\dots & \mathcal{I}_2 & \mathcal{I}_1 & 
\mathcal{A} & \mathcal{A}/\mathcal{I}_{N-1} & \dots\\
\dots & \mathcal{I}_3 & \mathcal{I}_2 & 
\mathcal{I}_1 & \mathcal{A} & \dots\\
\ddots & \vdots & \vdots & \vdots & \vdots & \ddots\\
\end{array}
\right).
\end{displaymath}
Observe that, given $x\in \mathcal{I}_i$ and some
class $a+\mathcal{I}_j\in \mathcal{A}/\mathcal{I}_{j}$ we have 
$x(a+\mathcal{I}_j)\subset xa+\mathcal{I}_{i+j}$ due to our assumption that $\mathcal{I}_i\mathcal{I}_j \subseteq\mathcal{I}_{i+j}$, so multiplication of these matrices is well-defined. Note that, using the matrix notation, left
modules are columns, while right modules are rows.

We consider two natural linear orders on $\mathbb{Z}$, we call the order 
where $i<i+1$ the first order, and the one where $i>i+1$ the second order.
These orders induce partial orders on the equivalence classes of primitive
idempotents in $\mathfrak{C}(\mathcal{A})$, which we will also call the
first and the second orders, respectively.
The following statement is a generalization of the main construction from
\cite{DR}:

\begin{proposition}\label{prop1}
\begin{enumerate}[(i)]
\item\label{prop1-2}
Left standard modules in the first order are given by direct 
summands of the following modules:
\begin{displaymath}
\Delta_{\mathfrak{C}}^{1,l}=
\left( 
\begin{array}{c}
\vdots \\
\mathcal{A}/\mathcal{I}_1 \\
\mathcal{A}/\mathcal{I}_1\\
\mathcal{A}/\mathcal{I}_1 \\
 0 \\
 0 \\
\vdots
\end{array}
\right)
\end{displaymath}
\item\label{prop1-3}
Left standard modules in the second order are given by direct summands of
the following module:
\begin{displaymath}
\Delta_{\mathfrak{C}}^{2,l}=\left( 
\begin{array}{c}
\vdots \\
 0 \\
 0 \\
\mathcal{A}/\mathcal{I}_1 \\
\mathcal{I}_1/\mathcal{I}_2\\
\mathcal{I}_2/\mathcal{I}_3\\
\vdots
\end{array}\right).
\end{displaymath}
\item\label{prop1-4}
Right standard modules for the first order are given by direct 
summands of the following module:
\begin{displaymath}
\Delta_\mathfrak{C}^{1,r}=
\left( 
\begin{array}{ccccccc}
\dots & \mathcal{I}_2/\mathcal{I}_3 & 
\mathcal{I}_1/\mathcal{I}_2 & 
\mathcal{A}/\mathcal{I}_1 & 0 & 0 & \dots\\
\end{array}
\right)
\end{displaymath}
\item\label{prop1-5}
Right standard modules for the second order are given by direct 
summands of the following module:
\begin{displaymath}
\Delta_\mathfrak{C}^{2,r}=\left( 
\begin{array}{ccccccc}
\dots & 0 & 0 &\mathcal{A}/\mathcal{I}_1  &\mathcal{A}/\mathcal{I}_1 & \mathcal{A}/\mathcal{I}_1& \dots\\
\end{array}\right)
\end{displaymath}
\item\label{prop1-1}
The category $\mathfrak{C}$ is quasi-hereditary with respect to both 
orders.
\end{enumerate}
\end{proposition}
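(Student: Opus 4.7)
The plan is to compute directly, from the matrix description of $\mathfrak{C}$, the traces that define the standard modules, and then to deduce quasi-heredity from these descriptions.

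For part (i), recall that $\Delta^1(x[j])$ is the quotient of $P(x[j])$ by the sum of images of all morphisms $P(y[j'])\to P(x[j])$ with $j'>j$ and $y\in\mathcal{A}$. The projective $P(x[j])$ is the $j$-th column of the matrix $\mathfrak{C}$, with entries $\mathcal{A}/\mathcal{I}_{N-k}$ at row $j-k$ for $0<k<N$, then $\mathcal{A}$ at row $j$, then $\mathcal{I}_k$ at row $j+k$ for $0<k<N$. A morphism $P(y[j'])\to P(x[j])$ is right multiplication by an element of $\mathfrak{C}(x[j],y[j'])$, which for $j'>j$ equals $\mathcal{I}_{j'-j}(x,y)$. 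Multiplying out, and using $\mathcal{I}_a\mathcal{I}_b\subseteq\mathcal{I}_{a+b}$ repeatedly, one checks that the union of these images fills every entry of $P(x[j])$ at rows $\geq j+1$, reduces the diagonal entry to $\mathcal{I}_1\subset\mathcal{A}$, and reduces each above-diagonal $\mathcal{A}/\mathcal{I}_{N-k}$ to $\mathcal{I}_1/\mathcal{I}_{N-k}$. The quotient therefore has $\mathcal{A}/\mathcal{I}_1$ in rows $j-(N-1),\dots,j$ and zero elsewhere, matching the module in (i); indecomposable standards correspond to the decomposition of the semisimple bimodule $\mathcal{A}/\mathcal{I}_1$. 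Part (ii) is parallel, but the relevant Hom-spaces now come from above the diagonal (namely $\mathcal{A}/\mathcal{I}_{N-(j-j')}$ for $j'<j$), so the trace cuts off the top of $P(x[j])$ and leaves exactly the subquotients $\mathcal{I}_k/\mathcal{I}_{k+1}$ displayed in (ii). Parts (iii)--(iv) follow by the same computation applied to rows instead of columns, or equivalently by passing to $\mathfrak{C}(\mathcal{A}^{\mathrm{op}})$, which interchanges the two orders.

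For part (v), I would verify quasi-heredity via the standard criterion that $\mathrm{End}_\mathfrak{C}(\Delta(\lambda))$ is a division ring and $P(\lambda)$ admits a filtration with top quotient $\Delta(\lambda)$ and further subquotients $\Delta(\mu)$ with $\mu>\lambda$. The endomorphism condition is immediate from the shape of the standards established in (i)--(iv). For the $\Delta^1$-filtration of $P(x[j])$ in the first order, the kernel of $P(x[j])\tto\Delta^1(x[j])$ is filtered entry-wise by the two-sided ideals $\mathcal{I}_1\supset\mathcal{I}_2\supset\dots$, and the semisimplicity of each bimodule $\mathcal{I}_k/\mathcal{I}_{k+1}$ lets one identify the successive layers as direct sums of $\Delta^1(y[j'])$'s with $j'>j$ (in fact with $j'=j+k$ at the $k$-th step). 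The second-order case is symmetric, starting the filtration from the ``upper'' end of $P(x[j])$.

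The main obstacle is that $\mathbb{Z}$ is not well-ordered under either of the two orders, so the classical Dlab-Ringel definition via a finite chain of heredity ideals does not apply verbatim. However, the nonzero region of every projective, injective and standard module is confined to a band of width at most $2N-1$ around the diagonal, so all relevant $\mathrm{Hom}$, $\mathrm{Ext}$ and filtration questions involve only finitely many objects at a time; this reduces the verification to finite-dimensional truncations obtained by restricting to the subcategory of objects $y[j]$ with $j$ in a finite interval $I\subset\mathbb{Z}$, where everything is classical, and the global statement then follows by a straightforward limit argument.
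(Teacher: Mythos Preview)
Your proposal is correct and follows essentially the same line as the paper: you compute standard modules by quotienting the projective column (respectively row) by the trace of all projectives with strictly larger index, which is exactly what the paper does by exhibiting the quotient of $\mathfrak{C}$ modulo the two-sided ideal generated by the relevant idempotents; and you prove the standard filtration of projectives by slicing along the chain $\mathcal{I}_1\supset\mathcal{I}_2\supset\cdots$, which is precisely the content of the paper's rhombal diagram~\eqref{eq3} and the two pictures in~\eqref{eq4}. Your closing paragraph on finite truncations is an extra piece of care the paper leaves implicit, but otherwise the arguments coincide.
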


\begin{proof}
Let $i\in\mathbb{Z}$. For the first order, the quotient of 
$\mathfrak{C}$ modulo the two-sided ideal, generated by all idempotents 
$\mathtt{e}_x[j]$, $x\in\mathcal{A}$, $j\in\mathbb{Z}$, $j>i$, 
looks as follows:
\begin{displaymath}
\left( 
\begin{array}{cccccc}
\ddots & \vdots & \vdots & \vdots & \vdots & \ddots\\
\dots & * & * & \mathcal{A}/\mathcal{I}_1 & 0 & \dots\\
\dots & * & * & \mathcal{A}/\mathcal{I}_1 & 0 & \dots\\
\dots & \mathcal{I}_2/\mathcal{I}_3 & 
\mathcal{I}_1/\mathcal{I}_2 & 
\mathcal{A}/\mathcal{I}_1 & 0 & \dots\\
\dots & 0 & 0 & 0 & 0 & \dots\\
\ddots & \vdots & \vdots & \vdots & \vdots & \ddots\\
\end{array}
\right)
\end{displaymath}
(here we do not care about the asterisks).

Similarly, for the second order, the quotient of $\mathfrak{C}$ modulo 
the two-sided ideal, generated by all idempotents $\mathtt{e}_x[j]$,
$x\in\mathcal{A}$, $j\in\mathbb{Z}$,  $j<i$, looks as follows:
\begin{displaymath}
\left( 
\begin{array}{cccccc}
\ddots & \vdots & \vdots & \vdots & \vdots & \ddots\\
\dots & 0 & 0 & 0 & 0 & \dots\\
\dots & 0 &\mathcal{A}/\mathcal{I}_1  & \mathcal{A}/\mathcal{I}_1 &  \mathcal{A}/\mathcal{I}_1& \dots\\
\dots & 0 &  \mathcal{I}_1/\mathcal{I}_2& * & * & \dots\\
\dots & 0& \mathcal{I}_2/\mathcal{I}_3 & 
* & * & \dots\\
\ddots & \vdots & \vdots & \vdots & \vdots & \ddots\\
\end{array}\right)
\end{displaymath}

As left modules are columns and right modules are rows,
the claims \eqref{prop1-2}--\eqref{prop1-5} follow.

The indecomposable right projective $\mathfrak{C}$-module, 
generated by  $\mathtt{e}_x[i]$, $x\in\mathcal{A}$, is a  direct
summands of the following module $P$:
{\small 
\begin{displaymath}
\left( 
\begin{array}{cccccccccccc}
\dots & 0 & \mathcal{I}_{N-1}
&\mathcal{I}_{N-2}&\dots&\mathcal{I}_{1}&
\mathcal{A} &\mathcal{A}/\mathcal{I}_{N-1} &  
\dots& \mathcal{A}/\mathcal{I}_{1}&0 & \dots\\
\end{array}\right).
\end{displaymath}
}
The filtration \eqref{eq1} induces a filtration on every component of
$P$, whose subquotients could be organized into the following 
rhombal picture:
{\tiny
\begin{equation}\label{eq3}
\begin{array}{ccccccccc}
\mathcal{I}_{N-1}
&\dots&\mathcal{I}_{2}&\mathcal{I}_{1}&
\mathcal{A} &\mathcal{A}/\mathcal{I}_{N-1} &
\mathcal{A}/\mathcal{I}_{N-2} &  
\dots& \mathcal{A}/\mathcal{I}_{1}\\
\hline
\\
&&&&\mathcal{I}_0/\mathcal{I}_{1}&&&&\\
&&&\mathcal{I}_1/\mathcal{I}_{2}&&\mathcal{I}_0/\mathcal{I}_{1}&&&\\
&&\mathcal{I}_2/\mathcal{I}_{3}&&
\mathcal{I}_1/\mathcal{I}_{2}&&\mathcal{I}_0/\mathcal{I}_{1}&&\\
&\dots&\dots&\dots&\dots&\dots&\dots&\dots&\\
\mathcal{I}_{N-1}&\dots&\dots&\dots&\dots&\dots&\dots
&\dots&\mathcal{I}_0/\mathcal{I}_{1}\\
&\dots&\dots&\dots&\dots&\dots&\dots&\dots&\\
&&\mathcal{I}_{N-1}&&
\mathcal{I}_{N-2}/\mathcal{I}_{N-1}&&
\mathcal{I}_{N-3}/\mathcal{I}_{N-2}&&\\
&&&\mathcal{I}_{N-1}&&\mathcal{I}_{N-2}/\mathcal{I}_{N-1}&&&\\
&&&&\mathcal{I}_{N-1}&&&&\\
\end{array}
\end{equation}
}

Organizing these subquotients into a filtration of $P$ as 
shown on the following pictures:
\begin{equation}\label{eq4}
\begin{picture}(60.00,60.00)
\drawline(05.00,30.00)(30.00,55.00)
\drawline(55.00,30.00)(30.00,55.00)
\drawline(55.00,30.00)(30.00,05.00)
\drawline(05.00,30.00)(30.00,05.00)
\drawline(30.00,10.00)(17.50,22.50)
\drawline(37.50,42.50)(17.50,22.50)
\drawline(37.50,42.50)(50.00,30.00)
\drawline(30.00,10.00)(50.00,30.00)
\drawline(30.00,15.00)(25.00,20.00)
\drawline(40.00,35.00)(25.00,20.00)
\drawline(40.00,35.00)(45.00,30.00)
\drawline(30.00,15.00)(45.00,30.00)
\put(20.00,40.00){\makebox(0,0)[cc]{$\cdot$}}
\put(22.00,38.00){\makebox(0,0)[cc]{$\cdot$}}
\put(24.00,36.00){\makebox(0,0)[cc]{$\cdot$}}
\end{picture}
\quad\quad\quad
\begin{picture}(60.00,60.00)
\drawline(05.00,30.00)(30.00,55.00)
\drawline(55.00,30.00)(30.00,55.00)
\drawline(55.00,30.00)(30.00,05.00)
\drawline(05.00,30.00)(30.00,05.00)
\drawline(30.00,10.00)(10.00,30.00)
\drawline(22.50,42.50)(10.00,30.00)
\drawline(22.50,42.50)(42.50,22.50)
\drawline(30.00,10.00)(42.50,22.50)
\drawline(30.00,15.00)(15.00,30.00)
\drawline(20.00,35.00)(15.00,30.00)
\drawline(20.00,35.00)(35.00,20.00)
\drawline(30.00,15.00)(35.00,20.00)
\put(40.00,40.00){\makebox(0,0)[cc]{$\cdot$}}
\put(38.00,38.00){\makebox(0,0)[cc]{$\cdot$}}
\put(36.00,36.00){\makebox(0,0)[cc]{$\cdot$}}
\end{picture}
\end{equation}
we obtain a filtration of $P$ by direct summands of the module
$\Delta_\mathfrak{C}^{1,r}$ and $\Delta_\mathfrak{C}^{2,r}$,
respectively. This means that right $\mathfrak{C}$-projectives 
are filtered by  standard modules for both orders. 
The claim \eqref{prop1-1}  follows and the proof is complete.
\end{proof}

\begin{corollary}\label{cor2}
\begin{enumerate}[(i)]
\item\label{cor2-1}
Left costandard modules for the first  order are given by direct 
summands of the following module:
\begin{displaymath} 
\nabla_\mathfrak{C}^{1,l}=\left( 
\begin{array}{c}
\vdots \\
(\mathcal{I}_2/\mathcal{I}_3)^*\\
( \mathcal{I}_1/\mathcal{I}_2)^*\\
(\mathcal{A}/\mathcal{I}_1)^*\\
0\\
0\\
\vdots
\end{array}\right)
\end{displaymath}
\item\label{cor2-2}
Left costandard modules for the second order are given by direct 
summands of the following module:
\begin{displaymath}\nabla_\mathfrak{C}^{2,l}=\left( 
\begin{array}{c}
\vdots \\
0 \\
0 \\
(\mathcal{A}/\mathcal{I}_1)^*\\
(\mathcal{A}/\mathcal{I}_1)^*\\
(\mathcal{A}/\mathcal{I}_1)^*\\
\vdots
\end{array}\right).
\end{displaymath}
\item\label{cor2-3}
Right costandard modules for the first order are given by direct 
summands of the following module:
\begin{displaymath}\nabla_\mathfrak{C}^{1,r}=\left( 
\begin{array}{ccccccc}
\dots & (\mathcal{A}/\mathcal{I}_1)^* &(\mathcal{A}/\mathcal{I}_1)^*  &(\mathcal{A}/\mathcal{I}_1)^* & 0&0& \dots\\
\end{array}\right)
\end{displaymath}
\item\label{cor2-4}
Right costandard modules for the second order are given by direct 
summands of the following module:
\begin{displaymath} \nabla_\mathfrak{C}^{2,r}=\left( 
\begin{array}{ccccccc}
\dots &0& 0&(\mathcal{A}/\mathcal{I}_1)^* & ( \mathcal{I}_1/\mathcal{I}_2)^*
 & (\mathcal{I}_2/\mathcal{I}_3)^* & \dots\\
\end{array}\right).
\end{displaymath}
\end{enumerate}
\end{corollary}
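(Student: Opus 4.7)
The plan is to derive the corollary from Proposition~\ref{prop1} by invoking the $\Bbbk$-linear duality $(-)^*$ between left and right $\mathfrak{C}$-modules, defined pointwise by $M^*(y[j]) = M(y[j])^*$ at each object $y[j]$. For any quasi-hereditary category this duality is a contravariant exact equivalence between locally finite-dimensional left and right modules; it preserves the labelling of simples by primitive idempotents $\mathtt{e}_x[i]$, and hence both of our natural orders, and it interchanges projectives with injectives as well as standards with costandards. Concretely, one obtains the canonical identifications $\nabla^{l}(x[i]) \cong \bigl(\Delta^{r}(x[i])\bigr)^*$ and $\nabla^{r}(x[i]) \cong \bigl(\Delta^{l}(x[i])\bigr)^*$ for every $x[i]$, with the same partial order used throughout each equation.

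Given this, the corollary reduces to an entrywise translation. Applying $(-)^*$ to a row of the matrix description produces a column in which each component is replaced by its $\Bbbk$-linear dual, zeros remain zero, and the positions of the nonzero entries are preserved. Dualizing $\Delta^{1,r}_{\mathfrak{C}}$ and $\Delta^{2,r}_{\mathfrak{C}}$ from parts \eqref{prop1-4}--\eqref{prop1-5} of Proposition~\ref{prop1} then yields exactly the columns claimed in \eqref{cor2-1} and \eqref{cor2-2}, while dualizing the left standards in \eqref{prop1-2}--\eqref{prop1-3} yields the rows asserted in \eqref{cor2-3} and \eqref{cor2-4}. Each of the four identifications is a direct visual comparison of the two displayed matrices, with no nontrivial calculation to perform.

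The only mildly delicate point I anticipate is justifying the duality rigorously in our infinite-dimensional setting: the category $\mathfrak{C}$ has infinitely many isomorphism classes of indecomposable projectives, and the displayed modules are themselves infinite-dimensional as $\Bbbk$-vector spaces. However, each morphism space in $\mathfrak{C}$ is finite-dimensional, and at each object $y[j]$ any fixed indecomposable summand of the standard or costandard modules in question takes a finite-dimensional value; so the pointwise dual is again an honest $\mathfrak{C}$-module, and the usual formal properties of $(-)^*$ (exactness, interchanging standards with costandards, preservation of the indexing order) are retained. Beyond this piece of bookkeeping, I do not expect any serious obstacle.
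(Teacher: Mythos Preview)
Your proposal is correct and matches the paper's own approach: the paper's proof is the single sentence ``This follows from Proposition~\ref{prop1} applying duality.'' You have simply spelled out what this duality is and why it behaves well in the locally finite-dimensional setting, which is a reasonable elaboration of the same argument.
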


\begin{proof}
This follows from Proposition~\ref{prop1} applying duality. 
\end{proof}

\begin{corollary}\label{cor25}
For every $x\in \mathcal{A}$ and every $i\in\mathbb{Z}$ there 
is an isomorphism
$\nabla_{\mathfrak{C}}^{2,l}(x,i) \cong \Delta_{\mathfrak{C}}^{1,l}(x,i+N)$.
\end{corollary}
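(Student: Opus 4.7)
The plan is to compare the two modules using the explicit matrix descriptions given by Proposition~\ref{prop1}(\ref{prop1-2}) and Corollary~\ref{cor2}(\ref{cor2-2}), and to identify their entries via the self-duality of $\mathcal{A}/\mathcal{I}_1$ as an $\mathcal{A}$-bimodule.

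The main input is the bimodule isomorphism $(\mathcal{A}/\mathcal{I}_1)^{*}\cong \mathcal{A}/\mathcal{I}_1$. This is available because $\mathcal{I}_0/\mathcal{I}_1 = \mathcal{A}/\mathcal{I}_1$ is semisimple as an $\mathcal{A}$-bimodule by the standing hypothesis: any semisimple $\mathcal{A}$-bimodule decomposes as a direct sum of simple bimodules of the form $V\otimes_{\Bbbk} W$, and each such summand is self-dual via the natural pairing, so the whole bimodule is self-dual.

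Applying this isomorphism row-by-row turns the block diagram of $\nabla^{2,l}_{\mathfrak{C}}(x,i)$ into one whose non-zero entries are copies of $\mathcal{A}/\mathcal{I}_1$ arranged exactly as in the diagram for $\Delta^{1,l}_{\mathfrak{C}}(x,i+N)$; the shift by $N$ in the second label is precisely the shift needed to line up the occurrences of these copies in the two pictures. What remains is to verify that this row-by-row identification is compatible with the $\mathfrak{C}$-action between different rows. The off-diagonal action on both modules is induced by the off-diagonal entries of $\mathfrak{C}$, which descend to left--right multiplication by $\mathcal{A}/\mathcal{I}_1$ on the semisimple bimodule sitting at each row. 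The principal obstacle is to check that these multiplications agree after the chosen self-duality identifications, which amounts to unwinding the definition of $\nabla^{2,l}_{\mathfrak{C}}$ as the dual of $\Delta^{2,r}_{\mathfrak{C}}$ in light of the bimodule structure; once the identifications above are in place, this is a finite and essentially mechanical check.
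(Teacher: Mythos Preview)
Your approach coincides with the paper's: compare the explicit column descriptions from Proposition~\ref{prop1}\eqref{prop1-2} and Corollary~\ref{cor2}\eqref{cor2-2} and use the self-duality of $\mathcal{A}/\mathcal{I}_1$. The paper's own proof is a single sentence to this effect.

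There is, however, one genuine flaw in your justification. You argue that $(\mathcal{A}/\mathcal{I}_1)^{*}\cong\mathcal{A}/\mathcal{I}_1$ because every semisimple $\mathcal{A}$-bimodule is self-dual, its simple summands $V\otimes_{\Bbbk}W$ being ``self-dual via the natural pairing''. This is false in general: the bimodule dual of $L_i\otimes_{\Bbbk}L'_j$ is $L_j\otimes_{\Bbbk}L'_i$, which is a different simple bimodule whenever $i\neq j$. The correct (and simpler) reason, implicit in the paper's proof, is that $\mathcal{A}/\mathcal{I}_1$ is a semisimple quotient \emph{ring} of the basic $\Bbbk$-algebra $\mathcal{A}$ and hence a product of copies of $\Bbbk$; in particular only ``diagonal'' simple bimodules $L_i\otimes_{\Bbbk}L'_i$ occur, and $(\mathcal{A}/\mathcal{I}_1)^{*}\cong\mathcal{A}/\mathcal{I}_1$ as left $\mathcal{A}$-modules. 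Note also that only the left module structure is actually needed here, since both $\nabla^{2,l}_{\mathfrak{C}}$ and $\Delta^{1,l}_{\mathfrak{C}}$ are left $\mathfrak{C}$-modules; the bimodule viewpoint is more structure than required. With this correction, your row-by-row identification and the compatibility check go through as you describe, and the argument matches the paper's.
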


\begin{proof}
Since the $\mathcal{A}$-module $\mathcal{A}/\mathcal{I}_1$ is 
semi-simple by our assumptions, the claim follows directly from Proposition~\ref{prop1}\eqref{prop1-2} and
Corollary~\ref{cor2}\eqref{cor2-2}. 
\end{proof}

Note that, by construction, the original category  $\mathcal{A}$ is
a centralizer subcategory of the category $\mathfrak{C}$.

\section{Symmetric quasi-hereditary envelopes of algebras}\label{s3}

From now on we assume that $\mathcal{A}$ has finitely many objects.
Let $A$ be the path algebra of $\mathcal{A}$. Then
$A$ is a finite-dimensional algebra and we may assume that it is given by
a quiver $Q$ with set of vertices $\{1, \dots, n\}$ and relations $R$. 
As in the previous section, we fix a filtration of $A$ by two-sided ideals 
\begin{equation}\label{eq2}
A=I_0 \supsetneq I_1 \supsetneq I_2\supsetneq \dots \supsetneq I_N=0
\end{equation}
with semisimple subquotients and such that $I_iI_j\subset I_{i+j}$.
For example, we can take \eqref{eq2} to be the radical filtration of $A$.
For $k \in \{1, \dots, n\}$ we denote by $e_k$ the idempotent corresponding to 
the vertex $k$ in $A$, and we denote the corresponding idempotent 
of $A[i]$ (that is in the $(i,i)$th matrix position) by $e_{k,i}$. 
Set $\mathfrak{C}:=\mathfrak{C}(A)$.

\begin{theorem}\label{Csymm}
Assume that $A$ is symmetric with the symmetric trace form $(\cdot,\cdot)$
and that $(\cdot,\cdot)$ induces a non-degenerate pairing between 
$A/I_{j}$ and $I_{N-j}$ for every $j$. Then the algebra
$\mathfrak{C}$ is symmetric.
\end{theorem}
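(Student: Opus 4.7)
The strategy is to upgrade the symmetric form on $A$ to a compatible family of traces on $\mathfrak{C}$, indexed by its objects. By construction $\mathfrak{C}(x[i], x[i]) = A(x,x) = e_x A e_x$, so I would define $\tau_{x[i]}$ to be the restriction of the trace of $A$ to this subspace. To conclude that $\mathfrak{C}$ is symmetric it then suffices to verify that, for every pair of objects $x[i]$, $y[j]$, the bilinear pairing
\[
\langle f, g\rangle := \tau_{x[i]}(gf) = \tau_{y[j]}(fg), \qquad f \in \mathfrak{C}(x[i], y[j]),\; g \in \mathfrak{C}(y[j], x[i]),
\]
is well defined, consistent in the sense that the two displayed expressions agree, and non-degenerate.

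The case $i = j$ is immediate from the symmetry of $A$, since both hom spaces coincide with the corresponding ones in $A$. For $i \neq j$, say $i < j$, we have $f \in \mathcal{I}_{j-i}(x,y)$ while $g$ is a coset $\tilde g + \mathcal{I}_{N-(j-i)}(y,x)$ in $A(y,x)/\mathcal{I}_{N-(j-i)}(y,x)$. The key observation is that any two lifts $\tilde g$, $\tilde g'$ differ by an element of $\mathcal{I}_{N-(j-i)}$, so their products with $f$ differ by something in $\mathcal{I}_{N-(j-i)} \mathcal{I}_{j-i} \subseteq \mathcal{I}_N = 0$. Hence $\tilde g f \in A(x,x)$ and $f \tilde g \in A(y,y)$ are genuinely well defined, and the two expressions for $\langle f,g\rangle$ agree by the cyclicity of $\tau$ on $A$. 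The case $i > j$ is entirely analogous, with the roles of subideal and quotient swapped.

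Non-degeneracy is the step where the hypothesis is really used. With $k := j-i > 0$, the pairing just defined is exactly the pairing between the $(x,y)$-Peirce component of $I_k$ and the $(y,x)$-Peirce component of $A/I_{N-k}$ induced by the symmetric form on $A$. The hypothesis says that this global pairing is non-degenerate; since the Peirce decomposition $A = \bigoplus_{p,q} e_p A e_q$ is orthogonal with respect to $(\cdot,\cdot)$, non-degeneracy descends to each pair of Peirce components. The analogous statement for $i > j$ pairs $A/\mathcal{I}_{N-(i-j)}$ against $\mathcal{I}_{i-j}$ and is handled identically.

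The main obstacle, and really the only point requiring care, is bookkeeping: in each of the regimes $i < j$, $i = j$, $i > j$, one must check that the composite of a subideal element with a quotient representative lands cleanly in $A$, independently of the chosen lift. That independence rests squarely on the combination $\mathcal{I}_a \mathcal{I}_b \subseteq \mathcal{I}_{a+b}$ together with $\mathcal{I}_N = 0$, both of which were built into the construction of $\mathfrak{C}$. Once this is in place, the verification of the symmetric structure reduces immediately to the cyclicity of $\tau$ on $A$ and the pairing hypothesis, with no further calculation required.
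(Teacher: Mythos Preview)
Your argument is correct and follows essentially the same route as the paper: you transport the trace form on $A$ to $\mathfrak{C}$ entrywise, check well-definedness on the quotient components via $\mathcal{I}_a\mathcal{I}_b\subseteq\mathcal{I}_{a+b}$ and $\mathcal{I}_N=0$, and deduce non-degeneracy from the pairing hypothesis. The paper phrases this in matrix-position language rather than the trace/categorical language you use, and is terser about well-definedness; your explicit remark that Peirce orthogonality lets the global non-degeneracy of $A/I_{N-k}\times I_k$ descend to individual $e_y({-})e_x$ components is a detail the paper leaves implicit.
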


\begin{proof}
Define a bilinear form $(\cdot,\cdot)_{\mathfrak{C}}$ on $\mathfrak{C}$, 
by setting
\begin{displaymath}
(a_{i,j}, b_{k,l})_{\mathfrak{C}}:= \delta_{j,k}\delta_{i,l}(a,b),
\end{displaymath}
where $a,b \in A$ (in a suitable ideal if $i>j$ resp. $k>l$), 
$i,j,k,l \in \mathbb{Z}$, and $a_{i,j}$ means the element $a$ in matrix 
position $(i,j)$. 

The form $(\cdot,\cdot)_{\mathfrak{C}}$ is bilinear, symmetric and
associative by construction. Again, by construction, the form 
$(\cdot,\cdot)_{\mathfrak{C}}$ pairs matrix positions $(i,j)$ and
$(j,i)$. By the  definition of $\mathfrak{C}$, the corresponding 
components in these positions are $A/I_{s}$ and $I_{N-s}$ for some
$s$. By our assumption, the form $(\cdot,\cdot)$ induces a 
nondegenerate pairing of $A/I_{s}$ and $I_{N-s}$. This yields that
$(\cdot,\cdot)_{\mathfrak{C}}$ is nondegenerate as well, completing the
proof.
\end{proof}

\begin{corollary}\label{cor3}
Assume that $A$ is symmetric and that \eqref{eq2} is both the 
radical  and the socle filtration of ${}_A A$ (i.e. ${}_A A$ is 
{\em rigid}). Then $\mathfrak{C}$ is symmetric.
\end{corollary}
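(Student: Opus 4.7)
The plan is to reduce Corollary~\ref{cor3} directly to Theorem~\ref{Csymm}. Writing the symmetric trace form as $(x,y)=\tau(xy)$ for a symmetrizing linear form $\tau\colon A\to\Bbbk$ (so that $\ker\tau$ contains no nonzero one-sided ideal of $A$), what must be verified is that for every $j$ the form $(\cdot,\cdot)$ induces a non-degenerate pairing between $A/I_j$ and $I_{N-j}$. That $(\cdot,\cdot)$ descends to a bilinear map $A/I_j\times I_{N-j}\to\Bbbk$ is immediate: the assumption $I_iI_j\subseteq I_{i+j}$ gives $I_j\cdot I_{N-j}\subseteq I_N=0$, whence $(I_j,I_{N-j})=0$. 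Moreover the right-hand kernel of the descended pairing vanishes because $(\cdot,\cdot)$ is already non-degenerate on $A$, so non-degeneracy reduces to the identity $I_j^{\perp}=I_{N-j}$, where $V^{\perp}:=\{a\in A\mid (v,a)=0\text{ for all }v\in V\}$.

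To compute $I_j^{\perp}=(\mathrm{rad}^jA)^{\perp}$, note that $a\in(\mathrm{rad}^jA)^{\perp}$ exactly when $\tau(\mathrm{rad}^j(A)\cdot a)=0$. Because $\mathrm{rad}^j(A)$ is a two-sided ideal, $\mathrm{rad}^j(A)\cdot a$ is a left ideal of $A$; being contained in $\ker\tau$, it must be zero. Thus $I_j^{\perp}$ equals the largest subspace of $A$ annihilated on the left by $\mathrm{rad}^j(A)$, which is precisely the $j$-th term $S_j$ of the socle filtration of ${}_AA$ (characterised inductively by $\mathrm{rad}^j(A)\cdot S_j=0$).

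Now the rigidity hypothesis says that the radical and socle filtrations of ${}_AA$ coincide; since $I_j=\mathrm{rad}^jA$, this forces $S_j=I_{N-j}$, and so $I_j^{\perp}=I_{N-j}$ for every $j$. The induced pairing $A/I_j\times I_{N-j}\to\Bbbk$ is therefore non-degenerate, and Theorem~\ref{Csymm} yields that $\mathfrak{C}$ is symmetric. The only slightly non-routine step is the identification $(\mathrm{rad}^jA)^{\perp}=S_j$, which is a standard consequence of the symmetric Frobenius structure; I do not anticipate a substantive obstacle beyond this.
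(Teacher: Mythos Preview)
Your proof is correct and follows essentially the same route as the paper: both reduce to Theorem~\ref{Csymm} by verifying that $(\,\cdot\,,\,\cdot\,)$ induces a non-degenerate pairing between $A/I_j$ and $I_{N-j}$, i.e.\ that $I_j^{\perp}=I_{N-j}$. The paper argues this in one sentence by saying that the trace form carries the unique Loewy filtration to another Loewy filtration, hence to itself; your version unpacks precisely this step by computing $(\mathrm{rad}^jA)^{\perp}=\mathrm{soc}^j({}_AA)$ directly from the Frobenius property and then invoking rigidity to identify $\mathrm{soc}^j({}_AA)$ with $\mathrm{rad}^{N-j}A$.
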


\begin{proof}
By our assumptions, the filtration \eqref{eq2} is the unique Loewy
filtration of ${}_A A$. The form $(\cdot,\cdot)$ pairs it with another
Loewy filtration, and hence with itself. This yields that 
$(\cdot,\cdot)$ induces a non-degenerate pairing between 
$A/I_{j}$ and $I_{N-j}$ for every $j$ and the claim follows from
Theorem~\ref{Csymm}.
\end{proof}

Some other examples to which Theorem~\ref{Csymm} can be applied
come from the category $\mathcal{O}$ and will be discussed later on
(see Example~\ref{ex2}). If $A$ is not symmetric (or if it is symmetric 
but does not satisfy the assumptions of Theorem~\ref{Csymm}) it is
more reasonable to try to embed $A$ into its quasi-hereditary ``envelope'' 
not as a centralizer subalgebra, but as an idempotent subquotient. This
goes as follows:

Assume that \eqref{eq2} is the radical filtration of $A$. We form a new 
algebra $\tilde A$ by attaching, for every vertex $k$, a vertex 
$\tilde k$ and an arrow $k \rightarrow \tilde k$, keeping the original
relations $R$, defining the algebra $A$.  Then $A$ is a centralizer
subalgebra of $\tilde A$ (corresponding to nontilded vertices) 
in the natural way, and  $\mathrm{Rad}\tilde A$ 
has nilpotency degree $N+1$. Moreover, the algebra $A$ is also
an idempotent quotient of $\tilde A$, obtained by factoring out the
two-sided ideal, generated by idempotents, associated with the
new (tilded) vertices. Set $\mathbf{N}=\{1,\dots,n\}$,
$\tilde{\mathbf{N}}=\{\tilde 1,\dots,\tilde n\}$, and
$\overline{\mathbf{N}}=\mathbf{N}\cup \tilde{\mathbf{N}}$.

Now $\mathrm{soc} {}_{\tilde A} \tilde A$ consists of simple modules with
indices $\tilde k$. The right projective $e_k\tilde A$ for $\tilde A$,
corresponding to a vertex $k\in \mathbf{N}$,  is the same as the right 
projective for $A$ at the same vertex. The right projective 
$e_{\tilde k}\tilde A$ at vertex $\tilde k\in \tilde{\mathbf{N}}$ 
is an extension of the simple 
at $\tilde k$ with the right projective at $k$ (the simple extending the top 
of $e_kA$), hence has a longer Loewy length. Therefore $e_k\mathrm{Rad}^{N}\tilde A=0$ or, equivalently, the Loewy length $N^r_k$ 
of $e_k\tilde A$ is strictly less than the nilpotency degree of
$\mathrm{Rad}\tilde A$ (which is $N+1$). 
Let $\tilde A e_k$ be the left projective at 
vertex $k$, $N^l_k$ its Loewy length.

We now take $\mathfrak{C}=\mathfrak{C}(\tilde A)$ (with respect to the
radical filtration) and form the  trivial extension
$\mathfrak{D}=\mathfrak{D}(\tilde A)$ of $\mathfrak{C}$ 
with its  ``restricted dual'' $\mathfrak{C}$-bimodule
\begin{displaymath}
\mathfrak{C}^*:= \underset{i,j\in \mathbb{Z};\, x,y\in \overline{\mathbf{N}}}{\bigoplus} 
\mathrm{Hom}_{\Bbbk}(e_{y,j}\mathfrak{C}e_{x,i},\Bbbk). 
\end{displaymath}
(see \cite[3.1]{Ha}). Being a trivial extension of $\mathfrak{C}$, the
algebra $\mathfrak{D}$ is automatically symmetric.
To make the notation consistent with the previous
section, from now on we assume that the nilpotency degree of 
$\mathrm{Rad}\tilde A$ is $N$.

We now extend our first order in the following way: for
$(k,i),(l,j)\in\overline{\mathbf{N}}\times \mathbb{Z}$ we set
$(k,i)>(l,j)$ if $i>j$ or if  $i=j$, $k\in\mathbf{N}$ and 
$l \in \tilde{\mathbf{N}}$. We will again call this order the first order.

\begin{proposition}\label{prop5}
The algebra $\mathfrak{D}(\tilde A)$ is quasi-hereditary with respect to 
the first order and for left standard $\mathfrak{D}$-modules we have
$\Delta_{\mathfrak{D}}^{1,l}(k,i)=\Delta_{\mathfrak{C}}^{1,l}(k,i)$,
$k \in \overline{\mathbf{N}}$, $i\in\mathbb{Z}$.
\end{proposition}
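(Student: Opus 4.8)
The plan is to deduce the quasi-hereditary structure of $\mathfrak{D}(\tilde A)$ from that of $\mathfrak{C}=\mathfrak{C}(\tilde A)$, which was already established in Proposition~\ref{prop1}\eqref{prop1-1}. First I would set up the relevant idempotent truncations: for each threshold index in the first order on $\overline{\mathbf{N}}\times\mathbb{Z}$, consider the two-sided ideal of $\mathfrak{D}$ generated by the idempotents $e_{x,j}$ lying strictly above the threshold, and examine the corresponding quotient. The key observation is that $\mathfrak{D}$ is the trivial extension of $\mathfrak{C}$ by $\mathfrak{C}^*$, and $\mathfrak{C}^*$ is concentrated, componentwise, in matrix positions $(j,i)$ whenever $\mathfrak{C}$ is nonzero in position $(i,j)$. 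Because of the way the first order on $\mathfrak{C}$ was chosen (with the "shift index" $i$ being the dominant part of the order, and $\mathbf{N}$ above $\tilde{\mathbf{N}}$ within each shift level), the dual summand $\mathfrak{C}^*$ contributes only to strictly lower-order positions: $\mathrm{Hom}_\Bbbk(e_{y,j}\mathfrak{C}e_{x,i},\Bbbk)$ sits in the $(x,i)$-row, $(y,j)$-column block, and one checks that $(x,i)>(y,j)$ in the extended first order exactly when the corresponding $\mathfrak{C}$-component is ``above the diagonal'' in the sense used in the proof of Proposition~\ref{prop1}. Crucially, the $\tilde k$ vertices were attached precisely so that the left projectives $\tilde A e_k$ at the \emph{nontilded} vertices have Loewy length strictly less than the nilpotency degree of $\mathrm{Rad}\,\tilde A$, which forces the relevant dual contributions in the $\Delta^{1,l}_{\mathfrak C}(k,i)$-positions to vanish.

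Next I would verify the two defining conditions of a quasi-hereditary structure for $\mathfrak{D}$ in the first order. For the standardization, I claim that passing to the quotient of $\mathfrak{D}$ by the ideal generated by idempotents above a given level kills exactly the $\mathfrak{C}^*$-part that would otherwise enlarge the would-be standard module: the trivial-extension summand $\mathfrak{C}^*$ in positions $(x,i)$-row with column index $(y,j)$ satisfying $(y,j)>(x,i)$ gets annihilated, while the part of $\mathfrak{C}^*$ landing in positions $\leq (x,i)$ is already absent because of the Loewy-length estimate $e_k\mathrm{Rad}^{N}\tilde A=0$ recorded just before the statement — the ``restricted dual'' $e_{x,i}\mathfrak{C}^*$ in the $(k,i)$-row only has nonzero components strictly to the left of (i.e. in columns strictly smaller than) the diagonal, i.e. already at positions $<(k,i)$, and there it simply doesn't appear. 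Hence the left module $e_{k,i}\mathfrak{D}/(\text{ideal above }(k,i))$ coincides with $e_{k,i}\mathfrak{C}/(\text{ideal above }(k,i))$, which by Proposition~\ref{prop1}\eqref{prop1-2} is a direct sum of copies of $\Delta^{1,l}_{\mathfrak C}(k,i)$. This gives both that $\Delta^{1,l}_{\mathfrak D}(k,i)=\Delta^{1,l}_{\mathfrak C}(k,i)$ and that the standard modules have the right endomorphism rings (the endomorphism ring of $\Delta^{1,l}_{\mathfrak D}(k,i)$ equals that of $\Delta^{1,l}_{\mathfrak C}(k,i)$, hence is local/the ground field as needed).

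For the projective-resolution-by-standards condition, I would argue that the indecomposable left projective $\mathfrak{D}e_{k,i}$ has a $\Delta^{1,l}_{\mathfrak D}$-filtration. The trivial extension gives a short exact sequence $0\to \mathfrak{C}^*e_{k,i}\to \mathfrak{D}e_{k,i}\to \mathfrak{C}e_{k,i}\to 0$ of left $\mathfrak{D}$-modules; the quotient $\mathfrak{C}e_{k,i}$ is $\Delta^{1,l}_{\mathfrak C}$-filtered by Proposition~\ref{prop1} (viewing the $\mathfrak C$-filtration as a $\mathfrak D$-filtration through the surjection $\mathfrak D\tto\mathfrak C$, which sends $\Delta^{1,l}_{\mathfrak C}$ to $\Delta^{1,l}_{\mathfrak D}$ by the identification just proved), and the submodule $\mathfrak{C}^*e_{k,i}$ is, as a left $\mathfrak D$-module, a sum of duals of right projectives of $\mathfrak C$; using that right $\mathfrak C$-projectives are $\Delta^{1,r}_{\mathfrak C}$-filtered (proved in Proposition~\ref{prop1}) and dualizing, plus Corollary~\ref{cor25} to reconcile the dual costandards with shifted standards, one sees $\mathfrak{C}^*e_{k,i}$ is also filtered by $\Delta^{1,l}_{\mathfrak D}$'s (here the tilde construction is again what makes the shifts work out so that only genuine standard modules, not costandards, survive). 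Splicing the two filtrations produces a $\Delta^{1,l}_{\mathfrak D}$-filtration of $\mathfrak D e_{k,i}$ with top section $\Delta^{1,l}_{\mathfrak D}(k,i)$ and lower sections indexed by strictly smaller elements; together with the standardization above, this establishes quasi-heredity.

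\textbf{Main obstacle.} The delicate point is bookkeeping the interaction between the first order on $\overline{\mathbf{N}}\times\mathbb{Z}$ and the ``restricted dual'' bimodule $\mathfrak{C}^*$: one must check that $\mathfrak{C}^*e_{k,i}$ genuinely decomposes into standard (not costandard) pieces in the first order, which hinges on the asymmetry engineered by attaching the $\tilde k$-vertices — namely the strict inequality between the Loewy length of $e_k\tilde A$ and the nilpotency degree of $\mathrm{Rad}\,\tilde A$, ensuring the dual of a right $\mathfrak C$-projective, when reindexed via the $N$-shift of Corollary~\ref{cor25}, lands among the $\Delta^{1,l}_{\mathfrak D}$ and is supported on strictly lower-order idempotents. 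Making this precise, rather than the general trivial-extension formalism, is where the real work lies.
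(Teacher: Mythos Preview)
Your overall strategy matches the paper's: decompose $\mathfrak{D}e_{k,i}$ via the trivial-extension short exact sequence into $\mathfrak{C}e_{k,i}$ and $(e_{k,i}\mathfrak{C})^*$, filter each piece by $\Delta^{1,l}_{\mathfrak{C}}$'s, and then verify the ordering condition so that the resulting filtration witnesses quasi-heredity with $\Delta^{1,l}_{\mathfrak{D}}=\Delta^{1,l}_{\mathfrak{C}}$. Two points, however, need correction.

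First, when you dualize the right projective $e_{k,i}\mathfrak{C}$ you must use its $\Delta^{2,r}_{\mathfrak{C}}$-filtration, not the $\Delta^{1,r}_{\mathfrak{C}}$-filtration. Dualizing $\Delta^{1,r}_{\mathfrak{C}}$ gives $\nabla^{1,l}_{\mathfrak{C}}$, and Corollary~\ref{cor25} says nothing about these; it identifies $\nabla^{2,l}_{\mathfrak{C}}(k,i)\cong\Delta^{1,l}_{\mathfrak{C}}(k,i+N)$. So the paper takes the $\Delta^{2,r}_{\mathfrak{C}}$-filtration of $e_{k,i}\mathfrak{C}$ (indexed by $(j,i),(j,i-1),\dots,(j,i-N_k^r)$), dualizes to a $\nabla^{2,l}_{\mathfrak{C}}$-filtration of $(e_{k,i}\mathfrak{C})^*$, and then applies the $N$-shift to read it as a $\Delta^{1,l}_{\mathfrak{C}}$-filtration with indices $(j,i+N),\dots,(j,i+N-N_k^r)$. (Relatedly, in the quasi-hereditary convention used here the sections below $\Delta(k,i)$ carry \emph{larger}, not smaller, indices.)

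Second, and more substantively, your ordering check is incomplete. The Loewy-length inequality $N_k^r<N$ (equivalently $e_k\mathrm{Rad}^{N-1}\tilde A=0$ after the renormalization) holds only for $k\in\mathbf{N}$; for $k\in\tilde{\mathbf{N}}$ one can have $N_k^r=N$, so the smallest second index among the $\Delta^{1,l}_{\mathfrak{C}}$'s in $(e_{k,i}\mathfrak{C})^*$ may equal $i$. The paper handles this case separately: for $k\in\tilde{\mathbf{N}}$, the bottom layer $e_k\mathrm{Rad}^{N-1}\tilde A$ has all composition factors $L^r(j)$ with $j\in\mathbf{N}$, so the offending subquotients are $\Delta^{1,l}_{\mathfrak{C}}(j,i)$ with $j\in\mathbf{N}$, and now the refinement of the first order (placing $\mathbf{N}$ above $\tilde{\mathbf{N}}$ at the same shift level) gives $(j,i)>(k,i)$. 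You mention this refinement in passing but never invoke it; without it the argument does not close for tilded vertices.
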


\begin{proof}
We first consider $\mathfrak{C}$. Let $e_{k,i}$ denote the idempotent 
in $\tilde A$ at the vertex $k\in\overline{\mathbf{N}}$, in 
matrix position $i,i$. With respect to our first order 
left standard modules $\Delta_{\mathfrak{C}}^{1,l}(k,i)$ are uniserial 
with a filtration with composition factors 
\begin{displaymath}
L^l(k,i),\, L^l(k,i-1),\, \cdots, \, L^l(k,i-N+1)
\end{displaymath}
read from top to bottom (see Proposition~\ref{prop1}\eqref{prop1-2}).
Then, by \eqref{eq3}, the left projective $\mathfrak{C}e_{k,i}$ 
for $\mathfrak{C}$ has a filtration with subquotients 
\begin{displaymath}
\Delta^{1,l}_{\mathfrak{C}}(k,i),\quad \bigoplus_{j \in J_1} 
\Delta_{\mathfrak{C}}^{1,l}(j,i+1),\quad \dots,\quad 
\bigoplus_{j \in J_{N_k^l}} \Delta_{\mathfrak{C}}^{1,l}(j,i+N_k^l),
\end{displaymath}
where $\mathrm{Rad}^m\tilde Ae_k/\mathrm{Rad}^{m+1}\tilde Ae_k \cong \underset{j \in J_m}{\bigoplus}L^l(j)$.

Similarly, the right projective $e_{k,i}\mathfrak{C}$ has a filtration 
with subquotients 
\begin{displaymath}
\Delta_{\mathfrak{C}}^{2,r}(k,i),\quad
\bigoplus_{j \in \hat J_1} \Delta_{\mathfrak{C}}^{2,r}(j,i-1),\quad 
\dots,\quad 
\bigoplus_{j \in \hat J_{N_k^r}} \Delta_{\mathfrak{C}}^{2,r}(j,i-N_k^r),
\end{displaymath}
where $e_k\mathrm{Rad}^m\tilde A/e_k\mathrm{Rad}^{m+1}\tilde A \cong \underset{j \in \hat J_m}{\bigoplus}L^r(j)$.
Hence the left injective $(e_{k,i}\mathfrak{C})^*$ has a filtration with subquotients 
\begin{displaymath}
\bigoplus_{j \in \hat J_{N_k^r}} \nabla_{\mathfrak{C}}^{2,l}(j,i-N_k^r),\quad
\dots,\quad \bigoplus_{j \in \hat J_1} \nabla_{\mathfrak{C}}^{2,l}(j,i-1),\quad
\nabla_{\mathfrak{C}}^{2,l}(k,i)  
\end{displaymath}
and thus, by the isomorphism $\nabla_{\mathfrak{C}}^{2,l}(k,i) \cong \Delta_{\mathfrak{C}}^{1,l}(k,i+N)$ (Corollary~~\ref{cor25}), 
a filtration with subquotients
\begin{displaymath}
\bigoplus_{j \in \hat J_{N_k^r}} \Delta_{\mathfrak{C}}^{1,l}(j,i+N-N_k^r),\quad
\dots,\quad \bigoplus_{j \in \hat J_1} \Delta_{\mathfrak{C}}^{1,l}(j,i+N-1),\quad
\Delta_{\mathfrak{C}}^{1,l}(k,i+N). 
\end{displaymath}

We now claim that $\mathfrak{D}=\mathfrak{D}(\tilde A)$ is 
quasi-hereditary with 
$\Delta_{\mathfrak{D}}^{1,l}(k,i)=\Delta_{\mathfrak{C}}^{1,l}(k,i)$. 
As the projective module $\mathfrak{D}e_{k,i}$ has a filtration with 
subquotients $\mathfrak{C}e_{k,i}$ and $(e_{k,i}\mathfrak{C})^*$, which both 
have $\Delta_{\mathfrak{D}}^{1,l}$-filtrations by above,  
$\mathfrak{D}e_{k,i}$ also has  a $\Delta_{\mathfrak{D}}^{1,l}$-filtration. 
So it suffices to check that all 
standard modules appearing in $(e_{k,i}\mathfrak{C})^*$ have larger index than 
$(k,i)$. To see this, we need to distinguish two cases. 

The first case is when $k \in \mathbf{N}$. In this case, the 
smallest second index of the  standard modules appearing in
$(e_{k,i}\mathfrak{C})^*$ is $i+N-N_k^r$. But, as  seen above, for 
$k \in \{1, \dots, n\}$, $N_k^r <N$, so $i+N-N_k^r > i$, which is what we need.

The second case is when $k \in \tilde{\mathbf{N}}$. In this case 
the smallest second index of the standard modules appearing in 
$(e_{k,i}\mathfrak{C})^*$ can well be $i$, however, in this case $P^r(k)$ has 
simple top $L^r(k)$ and all other composition factors are of the form $L^r(j)$, 
with $j \in  \{1, \dots, n\}$. Therefore, the standard modules appearing in 
$(e_{k,i}\mathfrak{C})^*$ with smallest second index, namely 
$\Delta_{\mathfrak{C}}^{1,l}(j,i)$, have first index $j$ where $L^r(j)$ 
occurs in $e_k\mathrm{Rad}^{N_k^r}\tilde A $, so $j \in\mathbf{N}$, 
and $(k,i) < (j,i)$. This completes the proof that $\mathfrak{D}$ is 
quasi-hereditary.
\end{proof}

From Proposition~\ref{prop5} and  \cite[Corollary~5]{MT1} it follows
that, with respect to the first order, right $\mathfrak{D}$-projectives
also have standard filtrations. The corresponding standard modules are
described as follows:

\begin{lemma}\label{lem6}
The right standard module 
$\Delta_{\mathfrak{D}}^{1,r}(i,k)$ for $\mathfrak{D}$ is an extension of the $\mathfrak{C}$-modules $\Delta_{\mathfrak{C}}^{1,r}(i,k)$ and $\nabla_{\mathfrak{C}}^{2,r}(i-N+1,k)$.
\end{lemma}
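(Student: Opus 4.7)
The plan is to identify $\Delta^{1,r}_{\mathfrak{D}}(k,i)$ as the largest quotient of the right projective $e_{k,i}\mathfrak{D}$ whose composition factors $L^r(l,j)$ satisfy $(l,j) \leq (k,i)$ in the first order, and to read off its structure from the trivial extension. First, I would note that as a right $\mathfrak{D}$-module,
\[
0 \longrightarrow e_{k,i}\mathfrak{C}^* \longrightarrow e_{k,i}\mathfrak{D} \longrightarrow e_{k,i}\mathfrak{C} \longrightarrow 0
\]
is exact, with $e_{k,i}\mathfrak{C}^* \cong (\mathfrak{C}e_{k,i})^*$ being the right injective of $\mathfrak{C}$ at $(k,i)$ (on which $\mathfrak{C}^*$ acts trivially) and $e_{k,i}\mathfrak{C}$ being the right projective at $(k,i)$.

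Next, I would apply the right-exact truncation functor $M\mapsto M/M\mathcal{J}$, where $\mathcal{J}=\mathfrak{D}\bigl(\sum_{(l,j)>(k,i)}e_{l,j}\bigr)\mathfrak{D}$; this extracts the largest quotient with composition factors $\leq (k,i)$. Applied to the sequence above it yields
\[
Q_I \longrightarrow \Delta^{1,r}_{\mathfrak{D}}(k,i) \twoheadrightarrow \Delta^{1,r}_{\mathfrak{C}}(k,i) \longrightarrow 0,
\]
since the truncation of $e_{k,i}\mathfrak{C}$ is by definition $\Delta^{1,r}_{\mathfrak{C}}(k,i)$, while $Q_I$ denotes the truncation of $(\mathfrak{C}e_{k,i})^*$. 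By $\Bbbk$-duality, $Q_I$ is the dual of the largest left $\mathfrak{C}$-submodule of $\mathfrak{C}e_{k,i}$ whose composition factors live at rows $\leq i$.

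Using the explicit matrix description of $\mathfrak{C}e_{k,i}$ together with the multiplication rules $\mathcal{I}_s\mathcal{I}_t \subseteq \mathcal{I}_{s+t}$ and $\mathcal{I}_N=0$, I would verify that this maximal submodule is precisely the copy of $\Delta^{2,l}_{\mathfrak{C}}(k,i-N+1)$ (from Proposition~\ref{prop1}\eqref{prop1-3}) obtained by embedding its row-$r$ component $\mathcal{I}_{r-i+N-1}/\mathcal{I}_{r-i+N}$ into $\mathfrak{C}(r,i)$ via the natural inclusion. Closure under left multiplication follows from the filtration inclusions: for any $c\in \mathfrak{C}(r',r)$ and any representative $m\in\mathcal{I}_{r-i+N-1}$, the product $cm$ lies in $\mathcal{I}_{r'-i+N-1}$, which vanishes whenever $r'>i$. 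Dualizing and comparing with Corollary~\ref{cor2}\eqref{cor2-4} then identifies $Q_I$ with $\nabla^{2,r}_{\mathfrak{C}}(k,i-N+1)$.

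Finally, to upgrade the right exact sequence to a short exact sequence I would argue that $Q_I\to\Delta^{1,r}_{\mathfrak{D}}(k,i)$ is injective by a dimension count: the standard filtration of $e_{k,i}\mathfrak{D}$ provided by \cite[Corollary~5]{MT1} together with Proposition~\ref{prop5} already determines the composition factor multiplicities of $\Delta^{1,r}_{\mathfrak{D}}(k,i)$, and these match exactly those of an extension of $\Delta^{1,r}_{\mathfrak{C}}(k,i)$ by $\nabla^{2,r}_{\mathfrak{C}}(k,i-N+1)$. The hardest step is the maximality assertion above: one must rule out submodules of $\mathfrak{C}e_{k,i}$ properly larger than $\Delta^{2,l}_{\mathfrak{C}}(k,i-N+1)$ that are still supported on rows $\leq i$, which amounts to exhibiting, for every element outside this submodule, a multiplier $c\in \mathcal{I}_1$ whose product yields a nonzero element at row $i+1$.
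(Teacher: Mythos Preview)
Your overall plan---split $e_{k,i}\mathfrak{D}$ along the trivial extension, pass to the relevant quotient, identify the two pieces, and close with a dimension count---is the same route the paper takes. The divergence, and the genuine gap, is in the step you yourself flag as hardest: the claim that the largest left $\mathfrak{C}$-submodule of $\mathfrak{C}e_{k,i}$ supported on rows $\leq i$ is exactly $\Delta^{2,l}_{\mathfrak{C}}(k,i-N+1)$. Unwinding your proposed verification, at row $i$ you need every $a\in\tilde A e_k\setminus\mathcal{I}_{N-1}e_k$ to satisfy $\mathcal{I}_1 a\neq 0$; more generally you need $\mathrm{soc}^{N-m}({}_{\tilde A}\tilde A e_k)\subseteq \mathrm{Rad}^m(\tilde A e_k)$ for all $m$, i.e.\ rigidity of $\tilde A e_k$ with Loewy length~$N$. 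But for $k\in\mathbf N$ one has $\tilde A e_k\cong A e_k$, which has Loewy length at most $N-1$; in particular $\mathcal{I}_{N-1}e_k=0$ while $\mathrm{soc}(\tilde A e_k)\neq 0$, so the maximal submodule is strictly larger than $\Delta^{2,l}_{\mathfrak{C}}(k,i-N+1)$. (Already for $A=\Bbbk$, so $\tilde A=\Bbbk(1\to\tilde 1)$ and $N=2$, the whole of $\mathfrak{C}e_{1,i}$ lives on rows $\leq i$, whereas $\Delta^{2,l}_{\mathfrak{C}}(1,i-1)$ is one-dimensional.) Consequently $Q_I$ is in general bigger than $\nabla^{2,r}_{\mathfrak{C}}(k,i-N+1)$, and the map $Q_I\to\Delta^{1,r}_{\mathfrak{D}}(k,i)$ cannot be injective; your dimension count then only bounds the image without identifying it.

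The paper sidesteps this by never computing the full truncation of $(\mathfrak{C}e_{k,i})^*$. Instead it records the $\nabla^{2,r}_{\mathfrak{C}}$-filtration of $(\mathfrak{C}e_{k,i})^*$, observes that $\nabla^{2,r}_{\mathfrak{C}}(k,i-N+1)$ sits at the top of this filtration with simple socle $L^r(k,i-N+1)$, and notes that this simple does not occur as a composition factor of any $e_{k,j}\mathfrak{D}$ with $j>i$; hence $\nabla^{2,r}_{\mathfrak{C}}(k,i-N+1)$ survives in the quotient $X$ as a subquotient. Together with the surviving $\Delta^{1,r}_{\mathfrak{C}}(k,i)$ this produces a quotient $\tilde X$ of $X$ (and therefore of $\Delta^{1,r}_{\mathfrak{D}}(k,i)$) of the asserted shape, and the exhaustion over $j=i,\dots,i+N-1$ forces equality. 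Your argument is easily repaired along these lines: replace the maximality claim by the weaker (and true) statement that $\Delta^{2,l}_{\mathfrak{C}}(k,i-N+1)$ is \emph{a} submodule supported on rows $\leq i$, so that $\nabla^{2,r}_{\mathfrak{C}}(k,i-N+1)$ is a quotient of $Q_I$, and then argue as the paper does.
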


\begin{proof}
The right projective module $e_{k,i}\mathfrak{D}$ has a filtration with 
subquotients $e_{k,i}\mathfrak{C}$ and $(\mathfrak{C}e_{k,i})^*$.
The module $e_{k,i}\mathfrak{C}$ is filtered by 
\begin{displaymath}
\Delta_{\mathfrak{C}}^{1,r}(k,i),\, \Delta_{\mathfrak{C}}^{1,r}(k,i+1),\, 
\dots,\,  \Delta_{\mathfrak{C}}^{1,r}(k,i+N-1)
\end{displaymath}
and the module $\mathfrak{C}e_{k,i}$ is filtered by 
\begin{displaymath}
\Delta_{\mathfrak{C}}^{2,l}(k,i),\, \Delta_{\mathfrak{C}}^{2,l}(k,i-1),\, 
\dots,\,  \Delta_{\mathfrak{C}}^{2,l}(k,i-N+1)
\end{displaymath}
Therefore the module $(\mathfrak{C}e_{k,i})^*$ is filtered by \begin{displaymath}
\nabla_{\mathfrak{C}}^{2,r}(k,i-N+1),\, \nabla_{\mathfrak{C}}^{2,r}(k,i-N+2),\, \dots,\, \nabla_{\mathfrak{C}}^{2,r}(k,i).
\end{displaymath}

Let $X$ denote the quotient of $e_{k,i}\mathfrak{D}$ modulo the trace
of all $e_{k,j}\mathfrak{D}$, $j>i$. Obviously 
$\Delta_{\mathfrak{C}}^{1,r}(k,i)$ is a quotient of $X$. Since none of 
modules $e_{k,j}\mathfrak{D}$, $j>i$, contains $L^r(k,i-N+1)$, 
$\nabla_{\mathfrak{C}}^{2,r}(k,i-N+1)$ is a subquotient of $X$ as well. 
By definition, none of other $\Delta_{\mathfrak{C}}^{1,r}(k,j)$ 
contributes to $X$, which yields that $X$ has a quotient $\tilde X$, which 
is an extension of $\Delta_{\mathfrak{C}}^{1,r}(k,i)$ 
by $\nabla_{\mathfrak{C}}^{2,r}(k,i-N+1)$. 

As $\mathfrak{C}$ is quasi-hereditary with respect to the second order,
we also have a quotient $\Delta_{\mathfrak{C}}^{2,r}(k,i)$ which is uniserial 
with a filtration $L^r(k,i), L^r(k,i+1),\dots, L^r(k,i+N-1)$. Since 
$L^r(k,i+1)$ is in the top of the kernel of $e_{k,i}\mathfrak{D} 
\twoheadrightarrow X$, we know that $\Delta^{1,r}_\mathfrak{D} (k,i+1)$ 
appears as a subquotient of a standard filtration of 
$e_{k,i}\mathfrak{D}$. Inductively, we  obtain that the modules
\begin{displaymath}
\Delta^{1,r}_\mathfrak{D} (k,i),\, \Delta^{1,r}_\mathfrak{D} (k,i+1),\, 
\dots,\,  \Delta^{1,r}_\mathfrak{D} (k,i+N-1) 
\end{displaymath}
appear as subquotients  of a standard filtrations of $e_{k,i}\mathfrak{D}$. 
Each of those $\Delta^{1,r}_\mathfrak{D} (k,j)$ has a quotient which is an 
extension of $\Delta_{\mathfrak{C}}^{1,r}(k,j)$ by 
$\nabla_{\mathfrak{C}}^{2,r}(k,j-N+1)$ and we see that this exhausts the 
whole module. Hence the surjection of $X$  onto $\tilde X$ must be an 
isomorphism and right standard modules in the first 
order for $\mathfrak{D}$ are of the desired form.
\end{proof}

\begin{corollary}\label{cor8}
The algebra $\mathfrak{D}$ is quasi-hereditary with respect to the
second order as well.
\end{corollary}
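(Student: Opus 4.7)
To prove that $\mathfrak{D}$ is quasi-hereditary with respect to the second order, my plan is to mirror the proofs of Proposition~\ref{prop5} and Lemma~\ref{lem6}, replacing the first order with the second throughout. First, I would extend the second order on $\mathbb{Z}$ to a partial order on $\overline{\mathbf{N}}\times\mathbb{Z}$ with the opposite tiebreaker to the first order: declare $(k,i)>(l,j)$ iff $i<j$, or $i=j$ with $k\in\tilde{\mathbf{N}}$ and $l\in\mathbf{N}$.

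The core of the argument constructs a $\Delta^{2,l}_{\mathfrak{D}}$-filtration of each left projective $\mathfrak{D}e_{k,i}$ using the short exact sequence $\mathfrak{C}e_{k,i}\hookrightarrow \mathfrak{D}e_{k,i}\twoheadrightarrow(e_{k,i}\mathfrak{C})^*$ coming from the trivial-extension structure. Proposition~\ref{prop1}\eqref{prop1-1} applied in the second order supplies a $\Delta^{2,l}_{\mathfrak{C}}$-filtration of $\mathfrak{C}e_{k,i}$, while Corollary~\ref{cor2}\eqref{cor2-2} supplies a $\nabla^{2,l}_{\mathfrak{C}}$-filtration of the left injective $(e_{k,i}\mathfrak{C})^*$, which Corollary~\ref{cor25} rewrites as a filtration by shifted modules $\Delta^{1,l}_{\mathfrak{C}}(-,\bullet+N)$. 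Defining $\Delta^{2,l}_{\mathfrak{D}}(k,i)$ as the appropriate extension (the mirror of Lemma~\ref{lem6}), these two filtrations glue to the desired $\Delta^{2,l}_{\mathfrak{D}}$-filtration of $\mathfrak{D}e_{k,i}$.

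The main obstacle is the triangularity check: verifying that every subquotient other than $\Delta^{2,l}_{\mathfrak{D}}(k,i)$ at the top has strictly larger index than $(k,i)$ in the second order. Paralleling the two-case analysis in Proposition~\ref{prop5}, for $k\in\tilde{\mathbf{N}}$ the left projective $\tilde Ae_{\tilde k}$ is simple (since $\tilde k$ has no outgoing arrows in $\tilde A$), so the left Loewy length $N_k^l=1<N$ automatically places all non-top subquotients from $\mathfrak{C}e_{\tilde k,i}$ at second indices strictly larger than $(\tilde k,i)$ in the second order; for $k\in\mathbf{N}$, the analogous boundary subquotients arise from the socle of $\tilde Ae_k$, which consists entirely of simples at tilded vertices, and under the reversed tiebreaker these exceed $(k,i)$ in the second order. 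Once triangularity is established, quasi-hereditarity with respect to the second order follows.
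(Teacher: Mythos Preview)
Your strategy is sound in spirit but differs substantially from the paper's, and the execution has two concrete slips worth fixing.

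\textbf{Comparison with the paper.} The paper does not redo the Proposition~\ref{prop5}/Lemma~\ref{lem6} argument at all. Instead it exploits that $\mathfrak{D}$ is symmetric: projective and injective $\mathfrak{D}$-modules coincide. Starting from the already-established $\Delta^{1,l}_{\mathfrak{D}}$-filtration of a left projective, one dualizes to a $\nabla^{1,r}_{\mathfrak{D}}$-filtration of a right injective, hence of a right projective. Since $\Delta^{1,l}_{\mathfrak{D}}=\Delta^{1,l}_{\mathfrak{C}}$ is uniserial, its dual $\nabla^{1,r}_{\mathfrak{D}}(k,i)$ coincides with $\Delta^{2,r}_{\mathfrak{C}}(k,i-N+1)$, and because this shift by $N$ is uniform the ordering condition transfers automatically. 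This avoids any case analysis on $\overline{\mathbf{N}}$ and is considerably shorter than your route.

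\textbf{Two errors in your proposal.} First, the trivial-extension short exact sequence is backwards: $\mathfrak{C}^*$ is the ideal, so $(e_{k,i}\mathfrak{C})^*$ is the \emph{submodule} and $\mathfrak{C}e_{k,i}$ the \emph{quotient} of $\mathfrak{D}e_{k,i}$, not the other way round. Second, there is a left/right mix-up. If you really build $\Delta^{2,l}_{\mathfrak{D}}(k,i)$ as the extension (the Lemma~\ref{lem6} mirror), then by the argument in that lemma the $\Delta^{2,l}_{\mathfrak{D}}$-filtration of $\mathfrak{D}e_{k,i}$ has subquotients $\Delta^{2,l}_{\mathfrak{D}}(k,i),\Delta^{2,l}_{\mathfrak{D}}(k,i-1),\dots,\Delta^{2,l}_{\mathfrak{D}}(k,i-N+1)$, all with the \emph{same} first index $k$ and strictly decreasing second index; the triangularity is then automatic and no case split on $\mathbf{N}$ versus $\tilde{\mathbf{N}}$ is needed. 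The two-case analysis you describe (using $N^l_{\tilde k}=1$ and $\mathrm{soc}\,\tilde Ae_k$ concentrated at tilded vertices) is precisely what one needs for the Proposition~\ref{prop5} mirror on \emph{right} modules, showing $\Delta^{2,r}_{\mathfrak{D}}=\Delta^{2,r}_{\mathfrak{C}}$. Either route works once you keep left and right straight, but as written you have spliced the left-module claim onto the right-module verification.
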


\begin{proof}
Since $\mathfrak{D}$ is symmetric, projective and injective 
$\mathfrak{D}$-modules coincide. By Proposition~\ref{prop5},
left standard $\mathfrak{D}$-modules with respect to the first order
are uniserial and coincide with the corresponding $\mathfrak{C}$-modules.
Take a standard filtration of a left projective $\mathfrak{D}$-module.
Applying duality we get a costandard filtration of a right 
injective $\mathfrak{D}$-module, which is also a right projective.

Taking into account that these right costandard modules coincide, up 
to shift, with right standard modules with respect to the second order,
we obtain that right projective $\mathfrak{D}$-modules have a filtration
by right standard modules with respect to the second order. Since
all shifts are the same (by $N$), it follows that this filtration
satisfies the necessary ordering condition. The claim follows.
\end{proof}

\begin{remark}\label{rem10}
{\rm 
Assume that the right projective $\tilde A$-module at vertex $k$ has 
radical filtration with subquotients $P^1, \dots P^s$. Then the
indecomposable right projective $\mathfrak{C}$-module at $(k,i)$ 
looks as follows:
\begin{equation}\label{eq5}
\xymatrix@!=0pt{
&&&&&P_i^1&&&&\\
&&&&P_{i+1}^1&&P_{i-1}^2&&&\\
&&&P_{i+2}^1&&P_i^2&&\ddots&&\\
&&&&\ddots&&\ddots&&P^s_{i-s+1}&\\
&&&&&&&P^s_{i-s+2}&&\\
P^1_{i+N-1}  &&&&&&&&&\\
&P^2_{i+N-2}&&&&&&&&\\
&&&&&&&&&\\
&&&P^s_{i+N-s}&&&&&&\\
&&&&&&&&&
}
\end{equation}
If the left projective $\tilde A$-module  at vertex $k$ has 
a radical filtration with subquotients $Q^1, \dots Q^t$, then the
indecomposable right injective $\mathfrak{C}$-module at $(k,i)$ looks 
as follows:
\begin{equation}\label{eq6}
\xymatrix@!=2pt{
&&&&&Q_{i-N+t}^t&&&&\\
&&&&Q_{i-N+t+1}^t&&Q_{i-N+t-1}^{t-1}&&&\\
&&&Q_{i-N+t+2}^t&&Q_{i-N+t}^{t-1}&&\ddots&&\\
&&&&\ddots&&\ddots&&Q^1_{i-t+1}&\\
&&&&&&&Q^1_{i-t+2}&&\\
Q^t_{i+t-1}  &&&&&&&&&\\
&Q^{t-1}_{i+t-2}&&&&&&&&\\
&&&&&&&&&\\
&&&Q^1_{i}&&&&&&\\
&&&&&&&&&
}
\end{equation}
For $ k \in \tilde{\mathbf{N}}$, $s$ can reach $N$, but $t=1$.
For $k \in\mathbf{N}$, $t$ can reach $N$, but $s$ is always less than $N$ 
and $Q^t$ only has composition factors indexed by  $k \in \mathbf{N}$.

The corresponding indecomposable injective $\mathfrak{D}$-module is
obtained by gluing \eqref{eq5} and \eqref{eq6}. The standard filtrations
of this module with respect to the first and the second order can be
organized using the left and the right diagrams from \eqref{eq4},
respectively.
}
\end{remark}

\begin{proposition}\label{prop9}
A is an idempotent subquotient of $\mathfrak{D}$ as follows:
\begin{displaymath}
A \cong 1_{\tilde A_i}\mathfrak{D} 1_{\tilde A_i}/
1_{\tilde A_i}\mathfrak{D}\tilde e_i\mathfrak{D}1_{\tilde A_i},
\quad\text{ where }\quad
\tilde e_i= \sum_{k=1}^n e_{\tilde k, i} \in \mathfrak{D}.
\end{displaymath}
\end{proposition}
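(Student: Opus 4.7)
The plan is to first identify the algebras involved and then reduce everything to a computation in a trivial extension. Position $(i,i)$ of $\mathfrak{C}$ is a copy of $\tilde A$, and by the definition of $\mathfrak{C}^*$ its restricted dual at $(i,i)$ is $\tilde A^*$; combining these via the trivial extension structure on $\mathfrak{D} = \mathfrak{C}\ltimes\mathfrak{C}^*$ yields a natural algebra isomorphism $1_{\tilde A_i}\mathfrak{D}1_{\tilde A_i}\cong \tilde A\ltimes\tilde A^*$. Since $\tilde e_i$ is supported at position $(i,i)$, for any $u,v\in\mathfrak{D}$ the $(i,i)$-entry of $u\,\tilde e_i\,v$ equals $u_{(i,i)}\,\tilde e\,v_{(i,i)}$, so under this identification $1_{\tilde A_i}\mathfrak{D}\tilde e_i\mathfrak{D}1_{\tilde A_i}$ corresponds precisely to the two-sided ideal $(\tilde e)$ of $\tilde A\ltimes\tilde A^*$ generated by $\tilde e = \sum_{k=1}^{n} e_{\tilde k}$. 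The proposition thus reduces to exhibiting an isomorphism $(\tilde A\ltimes\tilde A^*)/(\tilde e)\cong A$.

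For the $\tilde A$-summand this is immediate from the construction of $\tilde A$: the tilded vertices are sinks, so $\tilde e\tilde A = \bigoplus_k \Bbbk e_{\tilde k}$, whence $\tilde A\tilde e\tilde A = \tilde A\tilde e$, and by definition $\tilde A/\tilde A\tilde e = A$. The substance of the proof is therefore to show the inclusion $\tilde A^*\subseteq(\tilde e)$. Decomposing $\tilde A^* = \bigoplus_{x,y\in\overline{\mathbf{N}}}(e_y\tilde A e_x)^*$, the summand with $x\in\tilde{\mathbf{N}}$ and $y\neq x$ vanishes because tilded vertices emit no arrows. If $y\in\tilde{\mathbf{N}}$, then for any $f$ in that summand one has $f\tilde e = f$ (since the support of $f$ consists only of paths ending at a tilded vertex), so the identity $(0,f) = (0,f)(\tilde e,0)$ places $f$ in $(\tilde e)$.

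The only remaining case, and the one where the main difficulty lies, is $x,y\in\mathbf{N}$, for which $f\tilde e = 0$ and $\tilde e f = 0$, so $f$ cannot be absorbed by $\tilde e$ directly. The key structural observation I would use is that the arrow $\alpha_y\colon y\to\tilde y$ induces a $\Bbbk$-linear bijection $e_y A e_x \to e_{\tilde y}\tilde A e_x$, $z\mapsto \alpha_y z$, because every $\tilde A$-path from $x$ to $\tilde y$ is uniquely of the form ``$A$-path from $x$ to $y$ post-composed with $\alpha_y$''. Dualising, right multiplication by $\alpha_y$ yields an isomorphism $e_x\tilde A^* e_{\tilde y} \xrightarrow{\sim} e_x\tilde A^* e_y$; so for any $f \in e_x\tilde A^* e_y$ there exists $u \in e_x\tilde A^* e_{\tilde y}$ with $u\alpha_y = f$. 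Since $\tilde e\alpha_y = \alpha_y$, the identity $(0,f) = (0,u)(\tilde e,0)(\alpha_y,0)$ in $\tilde A\ltimes\tilde A^*$ exhibits $f$ as an element of $(\tilde e)$, and combining this with the previous cases completes the verification.
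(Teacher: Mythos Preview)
Your argument is correct and follows the same overall reduction as the paper: both identify $1_{\tilde A_i}\mathfrak{D}1_{\tilde A_i}$ with the trivial extension $S=\tilde A\ltimes\tilde A^*$, note that the ideal in question corresponds to $S\tilde e S$, and then show that $S/S\tilde eS\cong A$, the crux being the inclusion $\tilde A^*\subseteq S\tilde eS$.

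Where you diverge is in how this last inclusion is established. The paper argues \emph{module-theoretically}: it describes the shape of the right projective $e_kS$ for $k\in\mathbf{N}$ as an extension of $e_k\tilde A$ (which coincides with $e_kA$) by the right injective $\tilde A$-module at $k$, observes that the latter has a semisimple layer of simples indexed by $\tilde{\mathbf{N}}$ sitting on top of the right injective $A$-module at $k$, and concludes that after factoring out $S\tilde eS$ the projectives collapse to those of $A$. Your approach is instead an explicit, element-level factorisation: you use the bijection $e_yAe_x\xrightarrow{\alpha_y\cdot} e_{\tilde y}\tilde Ae_x$ and its dual to write every $f\in e_x\tilde A^*e_y$ (with $x,y\in\mathbf{N}$) as $u\alpha_y$ with $u\in e_x\tilde A^*e_{\tilde y}$, hence as $(0,u)(\tilde e,0)(\alpha_y,0)$. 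This is more elementary and entirely self-contained, and it makes transparent exactly where the new arrows $\alpha_y$ are used; the paper's version is shorter and more structural but leaves the reader to unpack why the $\tilde A^*$-part really is swallowed by $S\tilde eS$. One small remark: your sentence about the $(i,i)$-entry can be phrased more cleanly by noting $\tilde e_i=1_{\tilde A_i}\tilde e_i 1_{\tilde A_i}$, which immediately gives $1_{\tilde A_i}\mathfrak{D}\tilde e_i\mathfrak{D}1_{\tilde A_i}=(1_{\tilde A_i}\mathfrak{D}1_{\tilde A_i})\tilde e(1_{\tilde A_i}\mathfrak{D}1_{\tilde A_i})$.
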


\begin{proof}
It is obvious that $1_{\tilde A_i}\mathfrak{D} 1_{\tilde A_i}$ is isomorphic 
to the trivial extension $S$ of $\tilde A$ by ${\tilde A}^*$. Now we 
claim  that the ideal ${\tilde A}^*$ is contained in the ideal, 
generated by $\displaystyle\tilde e:=\sum_{k=1}^n e_{\tilde k}$.

Consider the right projective $S$-module $e_kS$ at vertex 
$k \in \mathbf{N}$. This module has a filtration by the right projective 
$\tilde A$-module $e_k\tilde A$  (which is the dual of the corresponding
left injective  $A$-module and only has composition factors $L^r(j)$ 
for  $j  \in \mathbf{N}$), and the right injective $\tilde A$-module  at the 
vertex $k$, which has a semisimple quotient consisting of simples $L^r(r)$ 
for $r \in \tilde{\mathbf{N}}$ and sitting on a submodule isomorphic to 
the right injective $A$-module at the vertex $k$. Hence, right projectives 
for $S/S\tilde e S$ look like right projectives for $A$, so 
$S/S\tilde e S \cong A$. 
\end{proof}

\section{Triangular decomposition}\label{s4}

Recall (see \cite{Ko,Ko2}) that a directed subalgebra $B$ of a 
basic quasi-hereditary
algebra $A$ is called a {\em (strong) exact Borel subalgebra} provided 
that $A$ and $B$ have the same simple modules, the tensor induction 
functor $A\otimes_B{}_-$  is exact and maps simple modules to standard 
modules. Dually one defines {\em (strong) $\Delta$-subalgebras} (again 
see \cite{Ko}). There is an obvious generalization of these notions
to $\Bbbk$-linear categories (our algebras). We keep the setup of the 
previous section and identify the algebra $A/{I}_1$ with some 
maximal semisimple subalgebra of $A$, say $S$. Then $S$ is a
maximal semisimple subalgebra (in particular, a subspace) of all 
algebras $A/I_i$ for all $i>0$.

\begin{proposition}\label{prop21}
The algebra
\begin{displaymath}
\tilde{\mathcal{B}}:=\left( 
\begin{array}{cccccccc}
\ddots & \vdots & \ddots & \vdots & \vdots &\vdots & \vdots & \ddots\\
\dots & S & S & \ddots &  S & 0 &0 & \dots\\
\dots &0  & S & S & \ddots &  S & 0 & \dots\\
\dots &0  & 0 & S  & S  & \ddots & S & \dots\\
\dots &0&0&0&S &S & \ddots & \dots\\
\dots &0 & 0 &  0  &0  & S & S & \ddots\\
\dots &0 & 0 &  0  &0  &0  & S & \dots\\
\ddots & \vdots & \vdots & \vdots &\vdots &\vdots & \vdots & \ddots\\
\end{array}
\right)
\end{displaymath}
(here each row contains exactly $N$ nonzero entries) is a strong 
exact $\Delta$-subalgebra of both $\mathfrak{C}$ and $\mathfrak{D}$
with respect to the first order.
\end{proposition}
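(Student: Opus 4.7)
The plan is to verify the four conditions defining a strong exact $\Delta$-subalgebra in the sense of K\"onig: (i) $\tilde{\mathcal{B}}$ is a subcategory of $\mathfrak{C}$ (and hence of $\mathfrak{D}$); (ii) it is directed with respect to the first order and has the same simples as $\mathfrak{C}$ and $\mathfrak{D}$; (iii) each standard module $\Delta_\mathfrak{C}^{1,l}(k,i)=\Delta_\mathfrak{D}^{1,l}(k,i)$ restricts to the indecomposable projective $\tilde{\mathcal{B}}e_{k,i}$ of $\tilde{\mathcal{B}}$ (the strong condition); and (iv) both $\mathfrak{C}$ and $\mathfrak{D}$ are projective as right $\tilde{\mathcal{B}}$-modules, which yields exactness of the induction functor.

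To settle (i) and (ii) I would observe that $\tilde{\mathcal{B}}((k,i),(l,j))=S(k,l)$ is non-zero only when $k=l$ and $0\leq i-j\leq N-1$; since $S$ is identified with a subalgebra of each $A/I_m$, each such morphism space sits inside the corresponding entry of $\mathfrak{C}$ and a composition of two $S$-entries stays in $S$. In the first order, $(l,j)>(k,i)$ forces $j>i$, which kills the morphism space, giving directedness; the shared idempotents $e_{k,i}$ give the same simples. Reading off from the matrix description, $\tilde{\mathcal{B}}e_{k,i}$ is uniserial with composition factors $L(k,i),L(k,i-1),\ldots,L(k,i-N+1)$ from top to bottom, matching the description of $\Delta_\mathfrak{C}^{1,l}(k,i)$ from Proposition~\ref{prop5}. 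For (iii), since $\Delta_\mathfrak{C}^{1,l}(k,i)$ is $\mathfrak{C}$-generated by $e_{k,i}$ and each composition factor $L(k,i-m)$ is reached from the generator by an $S$-morphism that already belongs to $\tilde{\mathcal{B}}$, the $\tilde{\mathcal{B}}$-submodule of $\Delta_\mathfrak{C}^{1,l}(k,i)$ generated by $e_{k,i}$ is the entire module; two uniserial modules of the same length with the same composition factors being isomorphic completes the identification.

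The essential and hardest step is (iv). I would apply the filtration $A\supset I_1\supset\cdots\supset I_N=0$ entrywise to the right projective $e_{k,i}\mathfrak{C}$ and group the subquotients by radical depth: at depth $r$, the simple $L^r(l)$ occurring in $e_kI_r/e_kI_{r+1}$ appears in the row precisely at the columns $i-r,i-r+1,\ldots,i+N-1-r$, which is exactly the column support of the right $\tilde{\mathcal{B}}$-projective $e_{l,i-r}\tilde{\mathcal{B}}$. This produces a filtration of $e_{k,i}\mathfrak{C}$ by right $\tilde{\mathcal{B}}$-submodules whose successive subquotients are direct sums of right $\tilde{\mathcal{B}}$-projectives; the filtration then splits because projective subquotients induce splittings, so $\mathfrak{C}$ is right-projective over $\tilde{\mathcal{B}}$. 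For $\mathfrak{D}=\mathfrak{C}\oplus\mathfrak{C}^*$ as right $\mathfrak{C}$-modules, the analogous argument on the opposite side shows $\mathfrak{C}$ is also left-projective over $\tilde{\mathcal{B}}$, so $\mathfrak{C}^*$ is a direct sum of duals of left $\tilde{\mathcal{B}}$-projectives. Each such dual $(\tilde{\mathcal{B}}e_{l,i})^*$ is uniserial with the same composition factors as the right projective $e_{l,i-N+1}\tilde{\mathcal{B}}$, hence isomorphic to it; so $\mathfrak{C}^*$ is also a direct sum of right $\tilde{\mathcal{B}}$-projectives, and $\mathfrak{D}$ is right-projective over $\tilde{\mathcal{B}}$, which completes the verification.
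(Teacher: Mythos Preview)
Your argument is correct, but it follows a different route from the paper's. The paper observes that left projective $\tilde{\mathcal{B}}$-modules coincide with the left standard modules for $\mathfrak{C}$ and $\mathfrak{D}$ (your step (iii)), and then simply invokes K\"onig's characterization theorems: by \cite[Theorem~A]{Ko} this makes $\tilde{\mathcal{B}}^{\mathrm{op}}$ an exact Borel subalgebra of $\mathfrak{C}^{\mathrm{op}}$ and $\mathfrak{D}^{\mathrm{op}}$, and then \cite[Theorem~B]{Ko} turns this into the desired statement that $\tilde{\mathcal{B}}$ is a $\Delta$-subalgebra. In particular, the paper never explicitly checks right-projectivity. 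You instead verify the defining conditions directly, and your step (iv) is the substantive new content: the depth filtration of $e_{k,i}\mathfrak{C}$ by the ideals $I_r$ does give right $\tilde{\mathcal{B}}$-submodules, and the computation that the $r$-th layer is supported exactly on the $N$ consecutive columns $i-r,\ldots,i+N-1-r$ is correct (this is the ``horizontal'' slicing of the parallelogram underlying the rhombal picture \eqref{eq3}, as opposed to the two diagonal slicings used there). Your passage from $\mathfrak{C}$ to $\mathfrak{D}$ via the self-duality of $\tilde{\mathcal{B}}$-projectives (essentially the Nakayama/Frobenius property of $\tilde{\mathcal{B}}$) is also fine. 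The trade-off is clear: the paper's proof is shorter and conceptual but relies on the black boxes from \cite{Ko,Ko2}, whereas your argument is self-contained and yields the explicit right $\tilde{\mathcal{B}}$-projective decomposition of $\mathfrak{C}$ and $\mathfrak{D}$, which is extra structural information not visible in the paper's proof.
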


\begin{proof}
The algebra $\tilde{\mathcal{B}}$ is obviously a subalgebra of 
both $\mathfrak{C}$ and $\mathfrak{D}$. It is directed by definition
and thus quasi-hereditary with respect to the first order. Corresponding
right standard modules are just simple modules, corresponding
left standard modules are projectives and look as follows: 
\begin{displaymath}
\left( \begin{array}{c}
\vdots \\
{A}/{I}_1 \\
{A}/{I}_1\\
{A}/{I}_1\\
0 \\
0 \\
\vdots \\
\end{array}
\right)
\end{displaymath}
These coincide with left standard modules for both $\mathfrak{C}$ 
and $\mathfrak{D}$ (by Proposition~\ref{prop1}\eqref{prop1-2}
and Proposition~\ref{prop5}). Therefore, using \cite[Theorem~A]{Ko}, 
we deduce that $\tilde{\mathcal{B}}^{op}$ is an exact Borel subalgebra 
for  $\mathfrak{C}^{op}$ and $\mathfrak{D}^{op}$. Thus, by 
\cite[Theorem~B]{Ko}, we have that $\tilde{\mathcal{B}}$ is 
a $\Delta$-subalgebra for $\mathfrak{C}$ and $\mathfrak{D}$. That
$\tilde{\mathcal{B}}$ is strong follows from the definitions. 
This completes the proof.
\end{proof}

Assume now that the algebra $A$ is positively graded,
$\displaystyle A=\bigoplus_{i=0}^{\infty}A_i$ and that 
the filtration \eqref{eq2} coincides with the grading filtration, that is
$\displaystyle  I_j=\bigoplus_{i=j}^{\infty}A_i$. In this
case we have $I_j/I_{j+1}\cong A_j$ for all $i$, in particular, 
$I_j/I_{j+1}$ can be realized as a canonical subspace of $A$. 

\begin{proposition}\label{prop22} 
Under the above assumptions, the algebra 
\begin{displaymath}
\mathcal{B}:=\left( 
\begin{array}{cccccc}
\ddots & \vdots & \vdots & \vdots & \vdots & \ddots\\
\dots & A_0 & 0 & 0 & 0 & \dots\\
\dots & A_1 &A_0  &0  &0 & \dots\\
\dots & A_2& A_1&A_0 &0 & \dots\\
\dots & A_3 & A_2 & A_1 & A_0 & \dots\\
\ddots & \vdots & \vdots & \vdots & \vdots & \ddots\\
\end{array}
\right)
\end{displaymath}
is a strong exact Borel subalgebra of $\mathfrak{C}$ with respect
to the first order.
\end{proposition}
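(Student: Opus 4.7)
The plan is to verify the three defining conditions for $\mathcal{B}$ to be a (strong) exact Borel subalgebra of $\mathfrak{C}$ with respect to the first order: directedness of $\mathcal{B}$ with the same simples as $\mathfrak{C}$, exactness of the induction functor $\mathfrak{C}\otimes_{\mathcal{B}}-$, and the identification $\mathfrak{C}\otimes_{\mathcal{B}}L^l_{\mathcal{B}}(x,i)\cong \Delta^{1,l}_{\mathfrak{C}}(x,i)$ for every simple. These, together with \cite[Theorem~A]{Ko}, will finish the argument exactly as in Proposition~\ref{prop21}. Directedness is immediate from the matrix shape of $\mathcal{B}$: the hom-space $\mathcal{B}(x[i],y[j])$ vanishes for $j<i$ and equals the semisimple $A_0$ for $j=i$, so a short trace computation shows that the left $\mathcal{B}$-standards coincide with the simple $\mathcal{B}$-modules, which in turn agree with the simples of $\mathfrak{C}$ via the common set of diagonal idempotents.

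The identification of induced simples with standards is a row-by-row matrix calculation of
\begin{displaymath}
\mathfrak{C}\otimes_{\mathcal{B}} L^l_{\mathcal{B}}(x,i)\;\cong\;\mathfrak{C}e_{x,i}\big/\mathfrak{C}\cdot\mathrm{rad}(\mathcal{B})e_{x,i}.
\end{displaymath}
At row $i+k$ below the diagonal (with $k\ge 1$), the diagonal entry $\mathcal{A}$ already sends the generator $A_ke_x$ of $\mathrm{rad}(\mathcal{B})e_{x,i}$ into $A\cdot A_k=I_k$, absorbing the whole $\mathcal{I}_k e_x$-component and killing the row. At row $i-m$ for $0\le m\le N-1$, the relevant products $(\mathcal{A}/\mathcal{I}_{N-k-m})\cdot A_k$ sum over $k\ge 1$ to the image of $I_1$ in $A/\mathcal{I}_{N-m}$, so the quotient at that row collapses to $A/I_1=S$. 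Comparing with Proposition~\ref{prop1}\eqref{prop1-2} then yields the required isomorphism with $\Delta^{1,l}_{\mathfrak{C}}(x,i)$.

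For exactness I would establish the triangular decomposition
\begin{displaymath}
\mu\colon\tilde{\mathcal{B}}\otimes_S\mathcal{B}\;\xrightarrow{\;\sim\;}\;\mathfrak{C},\qquad S:=A_0,
\end{displaymath}
as $(\tilde{\mathcal{B}},\mathcal{B})$-bimodules, with $\tilde{\mathcal{B}}$ the algebra of Proposition~\ref{prop21}. At matrix position $(r,c)$ the image of $\mu$ is $\sum_m S\cdot A_{m-c}$, where $m$ ranges over the valid intermediate positions, and the grading identifies this direct sum with $\mathcal{I}_{r-c}=\bigoplus_{k\ge r-c}A_k$ below the diagonal and with $A/\mathcal{I}_{N-(c-r)}=\bigoplus_{k<N-(c-r)}A_k$ above it. A dimension count forces $\mu$ to be an isomorphism, and since $\tilde{\mathcal{B}}$ is a free right $S$-module ($S$ being semisimple) the resulting right-$\mathcal{B}$-module structure on $\mathfrak{C}$ is a direct sum of projectives $e_{y,j}\mathcal{B}$. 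Hence $\mathfrak{C}$ is right-$\mathcal{B}$-projective and $\mathfrak{C}\otimes_{\mathcal{B}}-$ is exact.

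The main obstacle is precisely this PBW-style decomposition: it is exactly the step at which the grading hypothesis of Proposition~\ref{prop22} goes beyond the mere filtration assumption of Section~\ref{s2}, providing the canonical transversal subspaces $A_k\subset I_k$ needed to split $\mathfrak{C}$ cleanly as a tensor product. Once this is in place, strongness of the Borel follows from the fact that $\mathcal{B}$ is defined on the full set of primitive idempotents of $\mathfrak{C}$, and \cite[Theorem~A]{Ko}, applied as in Proposition~\ref{prop21}, completes the proof.
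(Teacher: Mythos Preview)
Your proof is correct but takes a genuinely different route from the paper's. The paper does not verify exactness or compute $\mathfrak{C}\otimes_{\mathcal{B}}L$ directly; instead it applies K\"onig's characterisation \cite[Theorem~A]{Ko} by checking that left costandard $\mathcal{B}$-modules---the duals of the right projective rows $(\dots,A_2,A_1,A_0,0,\dots)$---coincide, via the identification $A_j\cong I_j/I_{j+1}$, with the restrictions of the left costandard $\mathfrak{C}$-modules of Corollary~\ref{cor2}\eqref{cor2-1}. You instead verify the defining conditions head-on, and in doing so you establish the triangular decomposition $\mathfrak{C}\cong\tilde{\mathcal{B}}\otimes_{\mathtt{S}_{\mathbb{Z}}}\mathcal{B}$ by a direct matrix calculation; the paper deduces that decomposition only afterwards (Theorem~\ref{thm25}) from Proposition~\ref{prop22} via~\cite{Ko2}, so your logical order is the reverse of the paper's. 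Your approach is more explicit and makes the right-$\mathcal{B}$-projectivity of $\mathfrak{C}$ transparent; the paper's is considerably shorter. One small slip: the equality $A\cdot A_k=I_k$ need not hold unless $A$ is generated in degree~$1$, but row $i+k$ is still killed because summing $\mathfrak{C}_{i+k,i+l}\cdot A_l$ over all $l\ge 1$ does recover $I_k$ (each $A_m$ with $m\ge k$ is hit by the term $l=m$). Your closing appeal to \cite[Theorem~A]{Ko} is then superfluous, since you have already verified the definition of an exact Borel subalgebra directly.
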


\begin{proof}
That $\mathcal{B}$ is a subalgebra follows from the definitions and the
fact that $A$ is graded (i.e. $A_iA_j\subset A_{i+j}$). Note that 
$A_0$ is a maximal semi-simple subalgebra of $A$ and hence simple
$A$-modules can be identified with simple $A_0$-modules. Therefore
simple $\mathfrak{C}$-modules (shifted simple $A$-modules) 
and $\mathcal{B}$-modules (shifted simple $A_0$-modules) can be 
identified as well. 

The algebra $\mathcal{B}$ is directed by definition
hence quasi-hereditary with respect to the first order.
Left standard $\mathcal{B}$-modules are simple. Right standard  
$\mathcal{B}$-modules are projective. Left costandard 
$\mathcal{B}$-modules are dual to right standard  
$\mathcal{B}$-modules and hence have the following form:
\begin{displaymath}
\left( 
\begin{array}{c}
 \vdots \\
A_2^*\\
A_i^*\\ 
A_0^* \\
 0 \\
 0 \\
\vdots\\
\end{array}\right)
\end{displaymath}
As $A_j\cong I_j/I_{j+1}$ for all $j$, from
Corollary~\ref{cor2}\eqref{cor2-1} we obtain that these 
costandard modules are restrictions of costandard 
$\mathfrak{C}$-modules. Hence $\mathcal{B}$ is an exact Borel 
subalgebra by \cite[Theorem~A]{Ko}. That $\mathcal{B}$ is strong
follows from the definitions. This completes the proof.
\end{proof}

\begin{remark}
{\rm If we assume the existence of a Borel subalgebra, the condition of left costandard modules for this algebra being isomorphic to left costandard modules for $\mathfrak{C}$ forces the Borel subalgebra to have the following form:
\begin{displaymath}
\left( 
\begin{array}{cccccc}
\ddots & \vdots & \vdots & \vdots & \vdots & \ddots\\
\dots & X_{0,i-1} & 0 & 0 & 0 & \dots\\
\dots & X_{1,i-1} &X_{0,i}  &0  &0 & \dots\\
\dots & X_{2,i-1}& X_{1,i}&X_{0,i+1} &0 & \dots\\
\dots & X_{3,i-1} & X_{2,i} & X_{1,i+1} & X_{0,i+2} & \dots\\
\ddots & \vdots & \vdots & \vdots & \vdots & \ddots\\
\end{array}
\right)
\end{displaymath}
where $ X_{j,i}$ are subspaces of $I_j$ providing a splitting of $I_j \twoheadrightarrow I_j/I_{j+1}$. Furthermore we must have $ X_{j,i} X_{i-k,k} \subseteq X_{j+i-k,k}$ for this to be a subalgebra.
If we assume that the Borel subalgebra is stable under the shift, i.e. that 
$ X_{j,i} = X_{j,i+1}$ for all $i,j$, then the above is simply the condition that $A$ is graded. Hence the existence of a Borel subalgebra which is invariant under the shift is equivalent to $A$ being graded with respect to the filtration (\ref{eq2}).
}
\end{remark}

We further assume that $A$ is positively graded. Then the trivial 
extension $\overline{A}=A\oplus A^*$ of $A$ inherits a natural
$\mathbb{Z}$-grading by assigning degree $-i$ to the space 
$A_i^*$, $i\geq 0$. We would need to redefine this natural grading as 
follows: set $\mathrm{deg}A_i^*=N-1-i$. For $i\in\mathbb{Z}$ set 
$\overline{A}_i=A_i\oplus A^*_{N-1-i}$ and, because of
$\mathrm{Rad}^N(A)=0$,  we have  $\overline{A}_i=0$ for all $i<0$.

\begin{proposition}\label{prop23} 
Under the assumptions of Proposition~\ref{prop22}, the algebra 
\begin{displaymath}
\overline{\mathcal{B}}:=\left( 
\begin{array}{cccccc}
\ddots & \vdots & \vdots & \vdots & \vdots & \ddots\\
\dots & \overline{A}_0 & 0 & 0 & 0 & \dots\\
\dots & \overline{A}_1 & \overline{A}_0 & 0  &0 & \dots\\
\dots & \overline{A}_2 & \overline{A}_1 & \overline{A}_0 & 0 & \dots\\
\dots & \overline{A}_3 & \overline{A}_2 & \overline{A}_1 & 
\overline{A}_0 & \dots\\
\ddots & \vdots & \vdots & \vdots & \vdots & \ddots\\
\end{array}
\right)
\end{displaymath}
is a strong exact Borel subalgebra of $\mathfrak{D}$ with respect
to the first order.
\end{proposition}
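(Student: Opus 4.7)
The plan is to follow exactly the same strategy as the proof of Proposition~\ref{prop22}, replacing the positively graded algebra $A$ by its graded trivial extension $\overline{A}$ and the category $\mathfrak{C}$ by $\mathfrak{D}$. First I would verify that $\overline{\mathcal{B}}$ embeds into $\mathfrak{D}$ as a subalgebra. With the redefined grading $\mathrm{deg}\,A^*_i=N-1-i$, the algebra $\overline{A}$ becomes non-negatively graded, concentrated in degrees $0,\ldots,N-1$, and in the canonical identification $\mathfrak{D}_{r,c}=\mathfrak{C}_{r,c}\oplus\mathfrak{C}^*_{c,r}$ induced by the symmetric pairing from Theorem~\ref{Csymm}, the summand $A_{r-c}$ of $\overline{A}_{r-c}$ sits inside $\mathcal{I}_{r-c}\subseteq\mathfrak{C}_{r,c}$ via the grading splitting, while the summand $A^*_{N-1-(r-c)}$ sits inside $(\mathcal{A}/\mathcal{I}_{r-c})^{*}=\mathfrak{C}^*_{c,r}$. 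Closure under matrix multiplication then follows from the graded rule $\overline{A}_i\cdot\overline{A}_j\subseteq\overline{A}_{i+j}$, which matches the $\mathfrak{D}$-product componentwise because $A\cdot A\subseteq A$, the bimodule structure $A\cdot A^*\subseteq A^*$ sends degree $(i,N-1-j)$ to degree $N-1-(j-i)$, and $A^*\cdot A^*=0$. Being lower-triangular by construction, $\overline{\mathcal{B}}$ is directed, and hence quasi-hereditary with respect to the first order. Since $A^*_{N-1}$ squares to zero in the trivial extension, the diagonal entry $\overline{A}_0=A_0\oplus A^*_{N-1}$ has semisimple quotient $A_0$, so simples of $\overline{\mathcal{B}}$ are naturally indexed by $(\text{vertex of }A)\times\mathbb{Z}$ and match those of $\mathfrak{D}$.

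I would then identify left costandard modules on both sides. Since $\overline{\mathcal{B}}_{i,j}=0$ for $j>i$, the right projective $e_{k,i}\overline{\mathcal{B}}$ admits no maps into $e_{l,j}\overline{\mathcal{B}}$ for $j>i$, so right standard $\overline{\mathcal{B}}$-modules coincide with right projectives. Dualizing then gives a column whose entry at row $i-s$ is $\overline{A}_s^*=A_s^*\oplus A_{N-1-s}$ for $s=0,\ldots,N-1$, and zero elsewhere. On the $\mathfrak{D}$-side I would apply $\Bbbk$-linear duality to the short exact sequence from Lemma~\ref{lem6}, obtaining
\begin{displaymath}
0\to\nabla^{1,l}_{\mathfrak{C}}(k,i)\to\nabla^{1,l}_{\mathfrak{D}}(k,i)\to\Delta^{2,l}_{\mathfrak{C}}(k,i-N+1)\to 0,
\end{displaymath}
and then read off the column descriptions from Corollary~\ref{cor2}\eqref{cor2-1} and Proposition~\ref{prop1}\eqref{prop1-3} to conclude that the entry of $\nabla^{1,l}_{\mathfrak{D}}(k,i)$ at row $i-s$ is exactly $A_s^*\oplus A_{N-1-s}$. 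Hence the left costandard $\overline{\mathcal{B}}$-modules are restrictions of the left costandard $\mathfrak{D}$-modules, and \cite[Theorem~A]{Ko} applies to yield that $\overline{\mathcal{B}}$ is an exact Borel subalgebra of $\mathfrak{D}$ with respect to the first order. That $\overline{\mathcal{B}}$ is strong will then follow from the definitions, exactly as in the proof of Proposition~\ref{prop22}.

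The main obstacle is the careful bookkeeping in the third step: one must check that the $A$-piece of $\nabla^{1,l}_{\mathfrak{D}}(k,i)$, coming from the quotient $\Delta^{2,l}_{\mathfrak{C}}(k,i-N+1)$ whose $A$-index increases from $0$ at row $i-N+1$ to $N-1$ at row $i$, recombines with the $A^*$-piece, coming from the submodule $\nabla^{1,l}_{\mathfrak{C}}(k,i)$ whose $A^*$-index increases from $0$ at row $i$ to $N-1$ at row $i-N+1$, to give row by row exactly the vector space $\overline{A}_s^*$. This matching expresses a Poincaré-type identity $\overline{A}_s^*\cong\overline{A}_{N-1-s}$ that is built into the redefined grading $\mathrm{deg}\,A^*_i=N-1-i$, and is really the reason why this particular regrading of the trivial extension is the correct one for producing a Borel subalgebra of $\mathfrak{D}$.
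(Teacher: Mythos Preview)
Your approach is essentially identical to the paper's: it too reduces to the argument of Proposition~\ref{prop22}, invoking Lemma~\ref{lem6} (dualized, exactly as you do) to identify the left costandard $\mathfrak{D}$-modules and then applying \cite[Theorem~A]{Ko}. Two small slips to fix: the decomposition $\mathfrak{D}_{r,c}=\mathfrak{C}_{r,c}\oplus(\mathfrak{C}_{c,r})^{*}$ comes straight from the definition of the trivial extension, not from Theorem~\ref{Csymm} (which need not apply here since $A$ is not assumed symmetric); and for $r>c$ the entry $\mathfrak{C}_{c,r}$ equals $\mathcal{A}/\mathcal{I}_{N-(r-c)}$, not $\mathcal{A}/\mathcal{I}_{r-c}$, so the dual piece is $(\mathcal{A}/\mathcal{I}_{N-(r-c)})^{*}$, into which $A^{*}_{N-1-(r-c)}$ indeed embeds.
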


\begin{proof}
That $\overline{\mathcal{B}}$ is a directed subalgebra of 
$\mathfrak{D}$ and that simple $\overline{\mathcal{B}}$ and
$\mathfrak{D}$ modules can be identified follows from the
construction. Using Lemma~\ref{lem6}, the rest is proved just as in 
the proof  of Proposition~\ref{prop22}.
\end{proof}

Denote by $\mathtt{S}_{\mathbb{Z}}$ the subalgebra 
\begin{displaymath}
\tilde{\mathcal{B}}:=\left( 
\begin{array}{ccccc}
\ddots & \vdots & \vdots & \vdots & \ddots\\
\dots & A_0 & 0 & 0 &  \dots\\
\dots & 0 & A_0 & 0 &  \dots\\
\dots & 0 & 0 & A_0 &  \dots\\
\ddots &\vdots &\vdots & \vdots & \ddots\\
\end{array}
\right)
\end{displaymath}
of $\mathfrak{C}$. Note that $\mathtt{S}_{\mathbb{Z}}$ is a semi-simple
subalgebra of $\mathfrak{D}$, $\tilde{\mathcal{B}}$,   $\mathcal{B}$
and $\overline{\mathcal{B}}$.
Propositions~\ref{prop22} and \ref{prop23} allow 
us to deduce the following {\em triangular decompositions} for the algebras 
$\mathfrak{C}$ and $\mathfrak{D}$:

\begin{theorem}\label{thm25}
Under the assumptions of Proposition~\ref{prop22} we have:
\begin{enumerate}
\item\label{thm25-1}
Multiplication in $\mathfrak{C}$ induce the following isomorphism
of left $\tilde{\mathcal{B}}$- and right $\mathcal{B}$-modules:
$\mathfrak{C}\cong \tilde{\mathcal{B}} 
\otimes _{\mathtt{S}_{\mathbb{Z}}} \mathcal{B}$.
\item\label{thm25-2}
Multiplication in $\mathfrak{D}$ induce the following isomorphism
of left $\tilde{\mathcal{B}}$- and right $\overline{\mathcal{B}}$-modules:
$\mathfrak{D}\cong \tilde{\mathcal{B}} 
\otimes _{\mathtt{S}_{\mathbb{Z}}} \overline{\mathcal{B}}$.
\end{enumerate}
\end{theorem}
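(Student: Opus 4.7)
The plan is to verify both isomorphisms by a direct, position-by-position comparison, exploiting the grading on $A$ to decompose every matrix entry of $\mathfrak{C}$ and $\mathfrak{D}$ into homogeneous pieces of $A$ and $A^*$.

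First I would observe that $\mathtt{S}_{\mathbb{Z}}$ embeds as a common diagonal subalgebra of $\tilde{\mathcal{B}}$, $\mathcal{B}$, and $\overline{\mathcal{B}}$, so the multiplication in $\mathfrak{C}$ (resp.\ $\mathfrak{D}$) descends to a well-defined bimodule homomorphism
\begin{displaymath}
\mu: \tilde{\mathcal{B}} \otimes_{\mathtt{S}_{\mathbb{Z}}} \mathcal{B} \to \mathfrak{C}, \qquad \mu': \tilde{\mathcal{B}} \otimes_{\mathtt{S}_{\mathbb{Z}}} \overline{\mathcal{B}} \to \mathfrak{D}.
\end{displaymath}
These maps are, by construction, left $\tilde{\mathcal{B}}$- and right $\mathcal{B}$- (resp.\ $\overline{\mathcal{B}}$-) module homomorphisms, so the task reduces to showing they are bijective on each matrix component.

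For part~\eqref{thm25-1} I would fix a matrix position $(m, n)$. Each intermediate index $\ell$ contributes $\tilde{\mathcal{B}}(m,\ell) \otimes_{A_0} \mathcal{B}(\ell, n) = A_0 \otimes_{A_0} A_{\ell - n} \cong A_{\ell - n}$, with $\ell$ ranging over $\max(m, n) \leq \ell \leq \min(m, n) + N - 1$. The grading hypothesis gives $\mathcal{I}_j = \bigoplus_{p \geq j} A_p$ and $A/\mathcal{I}_j = \bigoplus_{p < j} A_p$, so $\mathfrak{C}(m, n)$ decomposes into the same range of homogeneous pieces $A_p$. The map $\mu$ sends $1 \otimes a_p$ to $a_p \in \mathfrak{C}(m, n)$, hence is surjective onto every graded component, and matching dimensions force bijectivity.

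For part~\eqref{thm25-2} I would combine part~\eqref{thm25-1} with the trivial-extension description $\mathfrak{D}(m, n) = \mathfrak{C}(m, n) \oplus \mathfrak{C}(n, m)^*$ and the decomposition $\overline{A}_p = A_p \oplus A^*_{N-1-p}$. This splits $\mu'$ into an $A$-summand, which is just $\mu$ from part~\eqref{thm25-1} and already known to map isomorphically onto $\mathfrak{C} \subset \mathfrak{D}$, and an $A^*$-summand. At position $(m, n)$ the latter sums $A^*_{N-1-(\ell-n)}$ over the same range of $\ell$; after reindexing $q = N - 1 - (\ell - n)$, this recovers exactly the dual of the range of homogeneous components constituting $\mathfrak{C}(n, m)$, hence $\mathfrak{C}(n, m)^*$. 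The main obstacle is the index bookkeeping across the cases $m > n$, $m = n$, $m < n$ with $n - m \leq N - 1$, and $|m - n| \geq N$, and in particular verifying that the re-grading $\deg A^*_i = N - 1 - i$ — rather than the naive negative grading — is precisely what aligns the $A^*$-summands of $\overline{\mathcal{B}}$-entries with the $\mathfrak{C}^*$-entries of $\mathfrak{D}$. Once this alignment is checked case by case, the remainder is routine.
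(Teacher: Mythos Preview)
Your argument is correct, but it takes a genuinely different route from the paper. The paper's proof is a one-line citation: having already established that $\tilde{\mathcal{B}}$ is a strong exact $\Delta$-subalgebra (Proposition~\ref{prop21}) and that $\mathcal{B}$ (resp.\ $\overline{\mathcal{B}}$) is a strong exact Borel subalgebra (Propositions~\ref{prop22} and~\ref{prop23}), the triangular decomposition follows directly from K{\"o}nig's general theorem in \cite{Ko2}, which asserts that a quasi-hereditary algebra possessing both a strong exact Borel subalgebra $B$ and a strong $\Delta$-subalgebra $\tilde{B}$ over a common maximal semisimple subalgebra $S$ always satisfies $A\cong \tilde{B}\otimes_S B$.

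Your approach bypasses \cite{Ko2} entirely and instead verifies the isomorphism by an explicit graded-component count at each matrix position. This is more elementary and entirely self-contained: the reader does not need to know anything about Borel or $\Delta$-subalgebras, only the grading hypothesis $I_j=\bigoplus_{p\ge j}A_p$. The price is the index bookkeeping you flag, particularly in part~\eqref{thm25-2} where the shifted grading $\deg A_i^*=N-1-i$ has to be matched against the dual component $(\mathfrak{C}_{nm})^*$. The paper's route, by contrast, exports all of that work to the structural results already proved in Propositions~\ref{prop21}--\ref{prop23} and to K{\"o}nig's machinery, yielding a conceptually cleaner but less transparent argument. Either approach is fine; yours has the advantage of making explicit \emph{why} the re-grading of $A^*$ was chosen as it was.
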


\begin{proof}
This follows from  Propositions~\ref{prop22} and \ref{prop23}
and \cite{Ko2}.
\end{proof}

Similarly one obtains the following:

\begin{theorem}\label{thm26}
With respect to the second order we have the following:
\begin{enumerate}
\item\label{thm26-1}
The algebra $\tilde{\mathcal{B}}$ is a strong exact Borel subalgebra
of both $\mathfrak{C}$ and $\mathfrak{D}$.
\item\label{thm26-2}
Under the assumptions of Proposition~\ref{prop22}, 
the algebra $\mathcal{B}$ is a strong exact $\Delta$-subalgebra
of $\mathfrak{C}$.
\item\label{thm26-3}
Under the assumptions of Proposition~\ref{prop22}, 
the algebra $\overline{\mathcal{B}}$ is a strong exact
$\Delta$-subalgebra of $\mathfrak{D}$.
\end{enumerate}
\end{theorem}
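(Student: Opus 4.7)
The plan is to adapt the arguments of Propositions~\ref{prop21}, \ref{prop22}, and \ref{prop23} to the second order, exploiting the fact that the second order is the opposite of the first. Consequently, the roles of standard and costandard modules are interchanged, and the explicit identifications used in those results carry over with minor modifications.

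For part~\eqref{thm26-1}, the subalgebra $\tilde{\mathcal{B}}$ is still directed in the second order (directedness is an order-independent condition on the quiver), so it is quasi-hereditary with respect to the second order. Because of this directedness, a direct inspection of rows of $\tilde{\mathcal{B}}$ shows that left costandard $\tilde{\mathcal{B}}$-modules in the second order are precisely the dualised right projectives, i.e.\ columns with $N$ consecutive semisimple entries of the form $(A/I_1)^*$. These coincide with the left costandards of $\mathfrak{C}$ from Corollary~\ref{cor2}\eqref{cor2-2} (using that $(A/I_1)^*\cong A/I_1$ as $A$-bimodule, by semisimplicity), and similarly with those of $\mathfrak{D}$ (using Corollary~\ref{cor8} together with Lemma~\ref{lem6} and the symmetry of $\mathfrak{D}$ to identify the left costandards of $\mathfrak{D}$ in the second order). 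By \cite[Theorem~A]{Ko}, $\tilde{\mathcal{B}}$ is therefore a strong exact Borel subalgebra of $\mathfrak{C}$ and $\mathfrak{D}$ with respect to the second order.

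For part~\eqref{thm26-2}, the algebra $\mathcal{B}$ is directed in the second order as well, and since all its nonzero entries lie on or below the diagonal, no morphisms of $\mathcal{B}$ go from a lower shift index to a higher one. As a result, the left standard modules of $\mathcal{B}$ in the second order equal its left projectives, which have the form of a column with entries $A_0, A_1, \ldots, A_{N-1}$ at rows $i, i+1, \ldots, i+N-1$, using the grading assumption $A_j\cong I_j/I_{j+1}$. This is exactly the module $\Delta^{2,l}_{\mathfrak{C}}(x,i)$ from Proposition~\ref{prop1}\eqref{prop1-3}, so left standards of $\mathcal{B}$ and $\mathfrak{C}$ coincide in the second order. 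By the same argument as in Proposition~\ref{prop21} (applying \cite[Theorem~A]{Ko} to the opposite algebras and then \cite[Theorem~B]{Ko}) we deduce that $\mathcal{B}$ is a strong exact $\Delta$-subalgebra of $\mathfrak{C}$ in the second order.

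Part~\eqref{thm26-3} proceeds analogously to part~\eqref{thm26-2}, with $\overline{\mathcal{B}}$ and $\overline{A}_j$ in place of $\mathcal{B}$ and $A_j$, and $\mathfrak{D}$ in place of $\mathfrak{C}$. The left projective $\overline{\mathcal{B}}\mathtt{e}_{x,i}$ is the column whose entries $\overline{A}_0, \ldots, \overline{A}_{N-1}$ sit at rows $i, \ldots, i+N-1$, and the key step, which I expect to be the main obstacle, is to match these with the left standards $\Delta^{2,l}_{\mathfrak{D}}(x,i)$. This matching reflects the trivial-extension decomposition $\mathfrak{D}=\mathfrak{C}\oplus\mathfrak{C}^*$: the $A_j$ summand of $\overline{A}_j$ comes from the $\mathfrak{C}$-part and the $A^*_{N-1-j}$ summand from the $\mathfrak{C}^*$-part, with the shift $N-1-j$ forced by the alignment of Corollary~\ref{cor25} with the symmetry of $\mathfrak{D}$ (and compatible with the picture of the indecomposable injective in Remark~\ref{rem10}). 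Once this identification is verified, \cite[Theorem~B]{Ko} completes the proof exactly as in part~\eqref{thm26-2}.
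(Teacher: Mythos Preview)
Your proposal is correct and follows exactly the route the paper intends: the paper's own proof reads ``Left to the reader'' after the phrase ``Similarly one obtains the following,'' so the expected argument is precisely to rerun Propositions~\ref{prop21}, \ref{prop22}, and \ref{prop23} with the roles of standard/costandard (and hence Borel/$\Delta$) swapped, which is what you do. Your identifications of the relevant (co)standard modules via Proposition~\ref{prop1}\eqref{prop1-3}, Corollary~\ref{cor2}\eqref{cor2-2}, and the trivial-extension description of $\mathfrak{D}$ are the correct ones; the only place where you hedge, namely the explicit form of $\Delta^{2,l}_{\mathfrak{D}}$, is indeed obtained by dualising Lemma~\ref{lem6} together with the symmetry of $\mathfrak{D}$, as you indicate.
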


\begin{proof}
Left to the reader. 
\end{proof}

\begin{corollary}\label{cor2601}
Under the assumptions of Proposition~\ref{prop22}, we have that 
$A\mathrm{-mod}$ embeds into $\mathcal{F}(\Delta^{1,l}_{\mathfrak{C}})$.
\end{corollary}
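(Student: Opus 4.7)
The plan is to use Proposition~\ref{prop22}, which identifies $\mathcal{B}$ as a strong exact Borel subalgebra of $\mathfrak{C}$ with respect to the first order. By the defining property of an exact Borel subalgebra, the induction functor $\mathrm{Ind}:=\mathfrak{C}\otimes_{\mathcal{B}}-:\mathcal{B}\mathrm{-mod}\to\mathfrak{C}\mathrm{-mod}$ is exact and sends simple $\mathcal{B}$-modules to standard $\mathfrak{C}$-modules $\Delta^{1,l}_{\mathfrak{C}}$. Taking a composition series of any $\mathcal{B}$-module $X$ and applying exactness produces a filtration of $\mathrm{Ind}(X)$ by standard modules, so the essential image of $\mathrm{Ind}$ is contained in $\mathcal{F}(\Delta^{1,l}_{\mathfrak{C}})$.

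The second step is to construct a fully faithful functor $G:A\mathrm{-mod}\hookrightarrow\mathcal{B}\mathrm{-mod}$. The matrix description of $\mathcal{B}$ in Proposition~\ref{prop22} identifies $\mathcal{B}\mathrm{-mod}$ with the category of $\mathbb{Z}$-graded $A$-modules (with $A_{i-j}$ at matrix position $(i,j)$ encoding the grading shifts), so the construction amounts to producing a fully faithful embedding of $A\mathrm{-mod}$ into graded $A$-modules. For an $A$-module $M$, I would use the radical filtration $M\supset I_1M\supset\cdots\supset I_NM=0$ and assign to $M$ a graded $A$-module supported in $N$ consecutive degrees, refined beyond the naive associated graded $\bigoplus_{i\geq 0} I_iM/I_{i+1}M$ so as to preserve enough of the filtration data to recover $M$ from its image.

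The hardest step will be verifying that the composite $\mathrm{Ind}\circ G$ is fully faithful as a functor into $\mathcal{F}(\Delta^{1,l}_{\mathfrak{C}})$. Faithfulness follows from the fact that the Schur functor $e\cdot{-}:\mathfrak{C}\mathrm{-mod}\to A\mathrm{-mod}$ associated to the shift-$0$ idempotent $e=e_{\cdot,0}$ satisfies $e\mathfrak{C}e=A$ (by the centralizer observation following Corollary~\ref{cor25}) and provides a one-sided inverse to the composition on the underlying $A$-module data. Fullness requires explicit Hom-space computations between $\Delta^{1,l}_{\mathfrak{C}}$-filtered modules, which use the explicit structure of the standards from Proposition~\ref{prop1} together with the constraint that $\mathcal{B}$-module morphisms must respect the grading of $A$.
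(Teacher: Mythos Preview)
Your first step---using Proposition~\ref{prop22} to conclude that induction from $\mathcal{B}$ lands in $\mathcal{F}(\Delta^{1,l}_{\mathfrak{C}})$---is exactly what the paper does.

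The paper's second step is entirely different from yours and much shorter: it simply asserts that $A$ is an idempotent subquotient of $\mathcal{B}$ ``by construction'', so that $A\text{-mod}$ embeds into $\mathcal{B}\text{-mod}$ by inflation. That is the whole argument; there is no third step and no separate verification of full faithfulness via Schur functors or explicit Hom computations.

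Your proposed route in Step~2 has a genuine gap. You correctly identify $\mathcal{B}\text{-mod}$ with $\mathbb{Z}$-graded $A$-modules, but there is in general no fully faithful functor $A\text{-mod}\to A\text{-grmod}$ of the kind you sketch. Already for $A=\Bbbk[x]/(x^2)$ (so $N=2$), every indecomposable object of $A\text{-grmod}$ has endomorphism ring $\Bbbk$, whereas $\mathrm{End}_A(A)\cong A$ is two-dimensional; hence the regular module has no fully faithful image in the graded category, and no ``refinement beyond the naive associated graded'' can repair this---the obstruction lies in the target. So your plan for $G$ cannot work as stated. The paper sidesteps this by not trying to grade arbitrary $A$-modules at all, instead invoking the idempotent-subquotient relationship between $A$ and $\mathcal{B}$.

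Your Step~3 therefore attempts strictly more than the paper does. If you want to follow the paper, replace your Steps~2--3 by the single observation that $A$ arises from $\mathcal{B}$ as an idempotent subquotient, and compose the resulting inflation with induction.
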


\begin{proof}
As $\mathcal{B}$ is a Borel subalgebra of $\mathfrak{C}$, we have that 
$\mathcal{B}\mathrm{-mod}$ embeds into 
$\mathcal{F}(\Delta^{1,l}_{\mathfrak{C}})$
via exact tensor induction. As $A$ is an idempotent subquotient
of $\mathcal{B}$ by construction, the claim follows.
\end{proof}

Similarly we have the following:

\begin{corollary}\label{cor26}
Under the assumptions of Proposition~\ref{prop22}, we have that 
$\mathrm{mod-}A$ embeds into $\mathcal{F}(\Delta^{2,r}_{\mathfrak{C}})$.
\end{corollary}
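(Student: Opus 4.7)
The plan is to mirror the proof of Corollary~\ref{cor2601} on the right-module side. By Theorem~\ref{thm26}\eqref{thm26-2}, the algebra $\mathcal{B}$ is a strong exact $\Delta$-subalgebra of $\mathfrak{C}$ with respect to the second order; this is exactly the dual of the Borel situation, so the right-sided tensor induction functor $-\otimes_{\mathcal{B}}\mathfrak{C}$ from $\mathrm{mod-}\mathcal{B}$ to $\mathrm{mod-}\mathfrak{C}$ is exact and sends simple right $\mathcal{B}$-modules to the right standards $\Delta_{\mathfrak{C}}^{2,r}$. Consequently this functor embeds $\mathrm{mod-}\mathcal{B}$ into $\mathcal{F}(\Delta_{\mathfrak{C}}^{2,r})$.

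Next, just as in the proof of Corollary~\ref{cor2601}, I would invoke the fact that $A$ is an idempotent subquotient of $\mathcal{B}$: taking $e$ to be the sum of the matrix idempotents in any block of $N$ consecutive rows/columns, the algebra $e\mathcal{B}e$ is the lower triangular $N\times N$ matrix algebra with diagonals $A_0,A_1,\dots,A_{N-1}$, which is precisely the matrix realization of the graded algebra $A=\bigoplus_{j=0}^{N-1}A_j$. Thus $\mathrm{mod-}A$ embeds fully faithfully into $\mathrm{mod-}\mathcal{B}$, and composing with the previous embedding gives the assertion.

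The only point that demands genuine checking is that the $\Delta$-subalgebra structure really does deliver an exact, simples-to-standards functor on right modules; but this is immediate from K\"onig's Theorems~A and~B (already applied in Proposition~\ref{prop21} and Theorem~\ref{thm26}), which say precisely that $\mathcal{B}$ being a $\Delta$-subalgebra of $\mathfrak{C}$ is equivalent to $\mathcal{B}^{op}$ being an exact Borel subalgebra of $\mathfrak{C}^{op}$. Hence there is no substantial new obstacle; the proof is entirely routine once Theorem~\ref{thm26}\eqref{thm26-2} is in hand, and I expect it to occupy only one or two sentences in the final text.
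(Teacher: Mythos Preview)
Your approach is exactly what the paper intends: it states only ``Similarly we have the following'' before Corollary~\ref{cor26} and leaves the reader to mirror the proof of Corollary~\ref{cor2601}, which is precisely what you do via Theorem~\ref{thm26}\eqref{thm26-2} and the passage from $\mathcal{B}$ to $A$.

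One caution about your added justification. The claim that, for $e$ the sum of $N$ consecutive matrix idempotents, the centralizer $e\mathcal{B}e$ ``is precisely the matrix realization of the graded algebra $A$'' is not correct as written. That lower-triangular $N\times N$ matrix algebra has $N$ times as many isomorphism classes of simple modules as $A$; its module category is equivalent to \emph{graded} $A$-modules supported in a window of length $N$, not to ungraded $A$-modules. For example, when $A=\Bbbk[x]/(x^2)$ with the obvious grading, $e\mathcal{B}e$ is the hereditary path algebra of $\bullet\to\bullet$, which is not isomorphic to $A$ and does not have $A$ as an idempotent subquotient in the sense of Proposition~\ref{prop9}. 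The paper's proof of Corollary~\ref{cor2601} itself only asserts that ``$A$ is an idempotent subquotient of $\mathcal{B}$ by construction'' without spelling this out, so your argument is no less complete than the paper's---but you should not substitute an incorrect elaboration for that bare assertion.
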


Let $B$ be the path algebra of the quiver
\begin{displaymath}
\xymatrix{ 
\dots\ar[r]&\bullet\ar[r]&\bullet\ar[r]&\bullet\ar[r]&\bullet\ar[r]&\dots
}
\end{displaymath}
modulo the relations that any composition of $N$ arrows is zero.

\begin{corollary}\label{cor27}
The category  $B\mathrm{-mod}$ embeds into 
$\mathcal{F}(\Delta^{2,l}_{\mathfrak{C}})$.
\end{corollary}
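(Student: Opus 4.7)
The plan is to combine Theorem~\ref{thm26}\eqref{thm26-1} with a direct identification of $B$ inside $\tilde{\mathcal{B}}$.

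First, I would invoke Theorem~\ref{thm26}\eqref{thm26-1}: the algebra $\tilde{\mathcal{B}}$ is a strong exact Borel subalgebra of $\mathfrak{C}$ with respect to the second order. Arguing exactly as in the proof of Corollary~\ref{cor2601}, the tensor induction functor $\mathfrak{C}\otimes_{\tilde{\mathcal{B}}}{}_-$ is exact and sends simples to the corresponding standard $\mathfrak{C}$-modules; this provides an embedding
\begin{displaymath}
\tilde{\mathcal{B}}\mathrm{-mod}\hookrightarrow \mathcal{F}(\Delta^{2,l}_{\mathfrak{C}}).
\end{displaymath}
It therefore suffices to embed $B\mathrm{-mod}$ into $\tilde{\mathcal{B}}\mathrm{-mod}$.

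Next, I would inspect the matrix description of $\tilde{\mathcal{B}}$: all of its entries lie in the semisimple subalgebra $S\cong A/I_1$. Writing $S=\prod_{k=1}^n\Bbbk e_k$ for the decomposition into primitive central idempotents (one for each simple $A$-module), and using that $e_k S e_{k'}=0$ for $k\neq k'$ together with the associativity of matrix multiplication, one sees at once that $\tilde{\mathcal{B}}$ splits as an algebra direct product of $n$ pieces indexed by $k\in\{1,\dots,n\}$. The $k$-th factor is cut out by the central idempotent $\sum_{i\in\mathbb{Z}} e_{k,i}$, and a direct reading of its matrix shape (upper triangular with $N$ nonzero $\Bbbk$-diagonals) shows it to be precisely the path algebra of the $\mathbb{Z}$-indexed linear quiver modulo all compositions of $N$ arrows, i.e.\ the algebra $B$.

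Fixing any such $k$ then identifies $B\mathrm{-mod}$ with a direct summand of $\tilde{\mathcal{B}}\mathrm{-mod}$, and composing with the tensor induction embedding above yields the desired embedding of $B\mathrm{-mod}$ into $\mathcal{F}(\Delta^{2,l}_{\mathfrak{C}})$. The only step demanding a moment's care is the product decomposition of $\tilde{\mathcal{B}}$, but this is a routine consequence of the semisimplicity of $S$; I do not foresee any serious obstacle.
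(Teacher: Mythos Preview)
Your argument is correct and very close to the paper's. Both proofs rest on the same two ingredients: the decomposition of $\tilde{\mathcal{B}}$ into a product of copies of $B$, and an embedding of $\tilde{\mathcal{B}}\mathrm{-mod}$ into $\mathcal{F}(\Delta^{2,l}_{\mathfrak{C}})$. The only difference is how the second ingredient is obtained. You invoke Theorem~\ref{thm26}\eqref{thm26-1} directly (Borel subalgebra for the \emph{second} order, giving induction into $\mathcal{F}(\Delta^{2,l}_{\mathfrak{C}})$), whereas the paper uses Proposition~\ref{prop21} ($\Delta$-subalgebra for the \emph{first} order, giving an embedding into $\mathcal{F}(\nabla^{1,l}_{\mathfrak{C}})$) and then appeals to Corollary~\ref{cor25} to identify $\mathcal{F}(\nabla^{1,l}_{\mathfrak{C}})=\mathcal{F}(\Delta^{2,l}_{\mathfrak{C}})$. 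These are two sides of the same coin: the Borel property for the second order is precisely what one gets by combining the $\Delta$-subalgebra property for the first order with the shift identification of Corollary~\ref{cor25}, so your route just packages the same content more compactly. Since Theorem~\ref{thm26} is stated before this corollary (with proof left to the reader), citing it is entirely legitimate.
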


\begin{proof}
The algebra $\tilde B$ consists of direct summands, each of which 
is isomorphic to $B$. As $\tilde B$ is a $\Delta$-subalgebra of 
$\mathfrak{C}$, we have that $\tilde B\mathrm{-mod}$, and 
hence $B\mathrm{-mod}$, embeds into $\mathcal{F}(\nabla_\mathfrak{C}^{1,l})$. However, up to a shift, costandard modules in the first order are the same 
as standard modules in the second order by Corollary~\ref{cor25}, so $\mathcal{F}(\nabla^{1,l}_\mathfrak{C})=
\mathcal{F}(\Delta^{2,l}_\mathfrak{C})$. This completes the proof.
\end{proof}

Similarly we have:

\begin{corollary}\label{cor28}
The category $\mathrm{mod-}B$ embeds into 
$\mathcal{F}(\Delta^{1,r}_{\mathfrak{C}})$.
\end{corollary}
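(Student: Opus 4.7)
The plan is to mirror the proof of Corollary~\ref{cor27}, transferring the argument from left to right modules via K\"onig's duality between exact Borel and strong $\Delta$-subalgebras.

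First, exactly as in the proof of Corollary~\ref{cor27}, I would observe that $\tilde{\mathcal{B}}$ decomposes into direct summands, one for each simple summand of $S=A/I_1$, each isomorphic to $B$. Consequently $\mathrm{mod-}B$ sits inside $\mathrm{mod-}\tilde{\mathcal{B}}$ as a direct summand, and it suffices to embed $\mathrm{mod-}\tilde{\mathcal{B}}$ into $\mathcal{F}(\Delta^{1,r}_{\mathfrak{C}})$.

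Next, I would recall the fact established inside the proof of Proposition~\ref{prop21} (via \cite[Theorem~A]{Ko}) that $\tilde{\mathcal{B}}^{op}$ is a strong exact Borel subalgebra of $\mathfrak{C}^{op}$ with respect to the first order --- this is the statement from which the $\Delta$-subalgebra claim of Proposition~\ref{prop21} is deduced in the first place. Since left standards of $\mathfrak{C}^{op}$ in the first order are precisely right standards of $\mathfrak{C}$ in the first order, this translates into the assertion that the tensor-induction functor
\[
-\otimes_{\tilde{\mathcal{B}}}\mathfrak{C}\colon \mathrm{mod-}\tilde{\mathcal{B}}\longrightarrow\mathrm{mod-}\mathfrak{C}
\]
is exact and maps every simple right $\tilde{\mathcal{B}}$-module to a right standard $\mathfrak{C}$-module in the first order.

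By the general theory of strong exact Borel subalgebras (\cite{Ko,Ko2}), this functor therefore embeds $\mathrm{mod-}\tilde{\mathcal{B}}$ as a full subcategory of $\mathcal{F}(\Delta^{1,r}_{\mathfrak{C}})$; composing with the summand inclusion $\mathrm{mod-}B\hookrightarrow\mathrm{mod-}\tilde{\mathcal{B}}$ completes the argument. The only point needing care is the identification of right $\mathfrak{C}$-modules with left $\mathfrak{C}^{op}$-modules together with the matching of the two first orders, but this is precisely the symmetric counterpart of the argument already carried out in the proof of Proposition~\ref{prop21}, so no essentially new ingredient is required.
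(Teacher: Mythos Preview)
Your proof is correct and is precisely the right-module analogue the paper intends by ``similarly''. The only cosmetic difference from the route taken in Corollary~\ref{cor27} is that you invoke directly the fact (established inside the proof of Proposition~\ref{prop21}) that $\tilde{\mathcal{B}}^{op}$ is a strong exact Borel subalgebra of $\mathfrak{C}^{op}$ for the first order, which lands you immediately in $\mathcal{F}(\Delta^{1,r}_{\mathfrak{C}})$; the left-module argument of Corollary~\ref{cor27} instead used the $\Delta$-subalgebra property of $\tilde{\mathcal{B}}$ and then needed Corollary~\ref{cor25} to pass from $\mathcal{F}(\nabla^{1,l}_{\mathfrak{C}})$ to $\mathcal{F}(\Delta^{2,l}_{\mathfrak{C}})$. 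Your formulation is therefore slightly more direct, but the two arguments are dual to one another and no new idea is involved.
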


\begin{corollary}\label{cor29}
\begin{enumerate}
\item\label{cor29-1} 
The category $\mathrm{mod-}B$ embeds into 
$\mathcal{F}(\Delta^{1,r}_{\mathfrak{D}})$.
\item\label{cor29-2}
The category  $B\mathrm{-mod}$ embeds into 
$\mathcal{F}(\Delta^{2,l}_{\mathfrak{D}})$.
\item\label{cor29-3}
Under the assumptions of Proposition~\ref{prop22}, we have 
that  $\overline{A}\mathrm{-mod}$ embeds into 
$\mathcal{F}(\Delta^{1,l}_{\mathfrak{D}})$.
\item\label{cor29-4}
Under the assumptions of Proposition~\ref{prop22}, we have 
that  $\mathrm{mod-}\overline{A}$ embeds into 
$\mathcal{F}(\Delta^{2,r}_{\mathfrak{D}})$.
\end{enumerate}
\end{corollary}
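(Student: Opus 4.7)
The plan is to follow the template of Corollaries~\ref{cor2601} and \ref{cor26}--\ref{cor28}. For each of the four parts, I will combine (i) an embedding of the relevant small module category into the module category of a Borel or $\Delta$-subalgebra of $\mathfrak{D}$ with (ii) the Borel/$\Delta$-subalgebra property itself; the latter, via exact tensor induction (or its right-module analogue, obtained by passing to opposite algebras as in the proof of Proposition~\ref{prop21}), lands inside the appropriate $\Delta$-filtration category of $\mathfrak{D}$.

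For \eqref{cor29-1} and \eqref{cor29-2} the subalgebra in question is $\tilde{\mathcal{B}}$. Since $A_0$ is semisimple with $n$ primitive idempotents, $\tilde{\mathcal{B}}$ decomposes as a $\Bbbk$-linear category into $n$ two-sided blocks, each isomorphic to $B$; hence $B\mathrm{-mod}$ and $\mathrm{mod-}B$ sit as direct summands of $\tilde{\mathcal{B}}\mathrm{-mod}$ and $\mathrm{mod-}\tilde{\mathcal{B}}$, respectively. Part \eqref{cor29-1} will follow from Proposition~\ref{prop21}, which identifies $\tilde{\mathcal{B}}$ as a strong exact $\Delta$-subalgebra of $\mathfrak{D}$ in the first order, so that $\mathrm{mod-}\tilde{\mathcal{B}}$ embeds into $\mathcal{F}(\Delta^{1,r}_{\mathfrak{D}})$. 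Part \eqref{cor29-2} follows analogously from Theorem~\ref{thm26}\eqref{thm26-1}: $\tilde{\mathcal{B}}$ is a strong exact Borel subalgebra of $\mathfrak{D}$ in the second order, so exact tensor induction embeds $\tilde{\mathcal{B}}\mathrm{-mod}$ into $\mathcal{F}(\Delta^{2,l}_{\mathfrak{D}})$.

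For \eqref{cor29-3} and \eqref{cor29-4} the subalgebra to use is $\overline{\mathcal{B}}$. Since $\overline{\mathcal{B}}$ is constructed from the positive grading on the trivial extension $\overline{A}$ in precisely the same way that $\mathcal{B}$ is constructed from the grading on $A$ (namely as the $\mathbb{Z}$-shift covering of a positively graded algebra), the argument of Corollary~\ref{cor2601} applies verbatim to exhibit $\overline{A}$ as an idempotent subquotient of $\overline{\mathcal{B}}$, yielding embeddings $\overline{A}\mathrm{-mod}\hookrightarrow\overline{\mathcal{B}}\mathrm{-mod}$ and $\mathrm{mod-}\overline{A}\hookrightarrow\mathrm{mod-}\overline{\mathcal{B}}$. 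Part \eqref{cor29-3} then follows from Proposition~\ref{prop23} (Borel subalgebra, first order) and part \eqref{cor29-4} from Theorem~\ref{thm26}\eqref{thm26-3} ($\Delta$-subalgebra, second order). The only step that requires a brief explicit check is the identification of $\overline{A}$ as an idempotent subquotient of $\overline{\mathcal{B}}$, but this is a purely formal analogue of the corresponding statement for $A\subset\mathcal{B}$, so no genuine obstacle arises; everything else is direct substitution into the schema already set up by the preceding four corollaries.
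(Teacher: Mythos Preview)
Your proposal is correct and matches the paper's intended approach: the paper states Corollary~\ref{cor29} without proof, clearly expecting the reader to transport the arguments of Corollaries~\ref{cor2601}--\ref{cor28} from $\mathfrak{C}$ to $\mathfrak{D}$ using the corresponding Borel and $\Delta$-subalgebras $\tilde{\mathcal{B}}$ and $\overline{\mathcal{B}}$ established in Propositions~\ref{prop21}, \ref{prop23} and Theorem~\ref{thm26}. Your identification of $\overline{A}$ as an idempotent subquotient of $\overline{\mathcal{B}}$ is justified since, as you note, $\overline{\mathcal{B}}$ is built from the graded pieces $\overline{A}_i$ of $\overline{A}$ in exactly the same way $\mathcal{B}$ is built from the $A_i$, so the ``by construction'' clause in the proof of Corollary~\ref{cor2601} carries over verbatim.
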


\section{Examples}\label{s5}

\begin{example}[An easy quiver algebra]\label{ex3}
{\rm
Let $A$ be the path algebra of the following quiver:
\begin{displaymath}
\xymatrix{ 
\mathtt{1}\ar[rr]^{\mathtt{a}}&&\mathtt{2}.
}
\end{displaymath}
Assume that \eqref{eq2} is the radical filtration of $A$.
Let $\mathtt{e}_1$ and $\mathtt{e}_2$ be the idempotents of $A$,
corresponding to the vertices $\mathtt{1}$ and $\mathtt{2}$, respectively.
In this case the algebra $\mathfrak{C}(A)$ is the path algebra 
of the following quiver:
\begin{displaymath}
\xymatrix{ 
\dots\ar[rr]^{\mathtt{e}^{0}_\mathtt{1}} && 
\mathtt{1}_{1}\ar[rr]^{\mathtt{e}^{1}_\mathtt{1}}\ar[dll]_{\mathtt{a}^1}
&& \mathtt{1}_2\ar[rr]^{\mathtt{e}^{2}_\mathtt{1}}\ar[dll]_{\mathtt{a}^2}
&& \mathtt{1}_3\ar[rr]^{\mathtt{e}^{3}_\mathtt{1}}\ar[dll]_{\mathtt{a}^3} 
&& \dots\ar[dll]_{\mathtt{a}^4}\\
\dots\ar[rr]_{\mathtt{e}^{0}_\mathtt{2}} && 
\mathtt{2}_{1}\ar[rr]_{\mathtt{e}^{1}_\mathtt{2}}
&& \mathtt{2}_2\ar[rr]_{\mathtt{e}^{2}_\mathtt{2}}
&& \mathtt{2}_3\ar[rr]_{\mathtt{e}^{3}_\mathtt{2}} && \dots\\
}
\end{displaymath}
modulo the ideal, generated by the following relations:
\begin{equation}\label{eq7}
\mathtt{e}^{i+1}_\mathtt{1}\mathtt{e}^{i}_\mathtt{1}=
\mathtt{e}^{i+1}_\mathtt{2}\mathtt{e}^{i}_\mathtt{2}=0,\quad
\mathtt{e}^{i-1}_\mathtt{2}\mathtt{a}^i=
\mathtt{a}^{i+1}\mathtt{e}^{i}_\mathtt{1},
\end{equation}
where $i\in\mathbb{Z}$. 

We also have  $A\cong\tilde{\Bbbk}$ (where $\mathtt{2}=\tilde{\mathtt{1}}$).
In this case the algebra $\mathfrak{D}(\tilde{\Bbbk})$ is the path algebra 
of the following quiver (the dual part $\mathfrak{C}^*$ is depicted using
the dotted arrows):
\begin{displaymath}
\xymatrix{ 
\dots\ar[rr]|-{\mathtt{e}^{0}_\mathtt{1}} && 
\mathtt{1}_{1}\ar[rr]|-{\mathtt{e}^{1}_\mathtt{1}}\ar[dll]|-{\mathtt{a}^1}
\ar@(ul,ur)@{.>}[]^{\mathtt{e}_{\mathtt{1}_{1}}^*}
\ar@/_1pc/@{.>}[ll]|-{(\mathtt{e}^{0}_\mathtt{1})^*}
&& \mathtt{1}_2\ar[rr]|-{\mathtt{e}^{2}_\mathtt{1}}\ar[dll]|-{\mathtt{a}^2}
\ar@(ul,ur)@{.>}[]^{\mathtt{e}_{\mathtt{1}_{2}}^*}
\ar@/_1pc/@{.>}[ll]|-{(\mathtt{e}^{1}_\mathtt{1})^*}
&& \mathtt{1}_3\ar[rr]|-{\mathtt{e}^{3}_\mathtt{1}}\ar[dll]|-{\mathtt{a}^3} 
\ar@(ul,ur)@{.>}[]^{\mathtt{e}_{\mathtt{1}_{3}}^*}
\ar@/_1pc/@{.>}[ll]|-{(\mathtt{e}^{2}_\mathtt{1})^*}
&& \dots\ar[dll]|-{\mathtt{a}^4}
\ar@/_1pc/@{.>}[ll]|-{(\mathtt{e}^{3}_\mathtt{1})^*}
\\
\dots\ar[rr]|-{\mathtt{e}^{0}_\mathtt{2}}
\ar@/_1pc/@{.>}[rru]|-{(\mathtt{a}^1)^*}&& 
\mathtt{2}_{1}\ar[rr]|-{\mathtt{e}^{1}_\mathtt{2}}
\ar@(dl,dr)@{.>}[]_{\mathtt{e}_{\mathtt{2}_{1}}^*}
\ar@/^1pc/@{.>}[ll]|-{(\mathtt{e}^{0}_\mathtt{2})^*}
\ar@/_1pc/@{.>}[rru]|-{(\mathtt{a}^2)^*}
\ar@{.>}[u]|-{\mathtt{x}^1}
&& \mathtt{2}_2\ar[rr]|-{\mathtt{e}^{2}_\mathtt{2}}
\ar@(dl,dr)@{.>}[]_{\mathtt{e}_{\mathtt{2}_{2}}^*}
\ar@/^1pc/@{.>}[ll]|-{(\mathtt{e}^{1}_\mathtt{2})^*}
\ar@/_1pc/@{.>}[rru]|-{(\mathtt{a}^3)^*}
\ar@{.>}[u]|-{\mathtt{x}^2}
&& \mathtt{2}_3\ar[rr]|-{\mathtt{e}^{3}_\mathtt{2}} 
\ar@(dl,dr)@{.>}[]_{\mathtt{e}_{\mathtt{2}_{3}}^*}
\ar@/^1pc/@{.>}[ll]|-{(\mathtt{e}^{2}_\mathtt{2})^*}
\ar@/_1pc/@{.>}[rru]|-{(\mathtt{a}^4)^*}
\ar@{.>}[u]|-{\mathtt{x}^3}
&&\ar@/^1pc/@{.>}[ll]|-{(\mathtt{e}^{3}_\mathtt{2})^*} \dots\\
}
\end{displaymath}
(here $x^i=(\mathtt{e}^{i-1}_\mathtt{2}\mathtt{a}^i)^*$)
modulo the ideal, generated by the relations \eqref{eq7},
the relations saying that the product of any two dotted arrows 
is zero, and the relations defining the natural 
$\mathfrak{C}$-bimodule  structure on $\mathfrak{C}^*$.
}
\end{example}

\begin{example}[Schur algebras for $GL_2$]\label{ex1}
{\rm

Let $A$ be a block of a Schur algebras for $GL_2$, say with $ap^k+r$ simple modules ($1\leq a \leq p-1, k\geq 0 , 1\leq r \leq p^k$). These have been extensively studied in \cite{MT1} and \cite{MT2} and in particular have been shown to be a hereditary idempotent subquotients of an infinite-dimensional symmetric quasi-hereditary algebra. Instead of taking an idempotent subquotient, one might also take a centralizer subalgebra $B$ which is again symmetric, such that it corresponds to  the endomorphism ring of the first $ap^k$ projectives for the Schur algebra. From the explicit description in terms of quivers and relations in \cite{MT2}, it is easily seen that this has a $\mathbb{Z}$-grading, which coincides with the radical filtration, hence has semisimple subquotients. By \cite[Theorem~3.3]{MV}, any connected
finite-dimensional self-injective positively graded algebra is rigid. 
Therefore we can apply Corollary~\ref{cor3} to obtain a symmetric quasi-hereditary algebra. This will however give an algebra that is significantly larger than the symmetric quasihereditary envelope given in \cite{MT1,MT2}.
}
\end{example}

\begin{example}[Category $\mathcal{O}$]\label{ex2}
{\rm
Let $\mathfrak{g}$ be a semi-simple finite-dimensional complex Lie
algebra with a fixed triangular decomposition 
$\mathfrak{g}=\mathfrak{n}_-\oplus \mathfrak{h}\oplus \mathfrak{n}_+$,
and $\mathfrak{p}\supset \mathfrak{h}\oplus \mathfrak{n}_+$ be a 
parabolic subalgebra of $\mathfrak{g}$. Let $\mathcal{O}_0^{\mathfrak{p}}$
denote the principal block of the $\mathfrak{p}$-parabolic category
$\mathcal{O}$ for $\mathfrak{g}$, and $A^{\mathfrak{p}}$ denote the
endomorphism algebra of the multiplicity-free direct sum  of all 
indecomposable projective-injective modules in $\mathcal{O}_0^{\mathfrak{p}}$.

The algebra $A^{\mathfrak{p}}$ is positively graded and symmetric
(see \cite{MS}) and simple $A^{\mathfrak{p}}$-modules are naturally 
indexed by the elements of some right cell for the Weyl group $W$ of
$\mathfrak{g}$. In the special case $\mathfrak{g}=\mathfrak{sl}_n$, the
parabolic subalgebra $\mathfrak{p}$ is given by some composition of
$n$ and the  algebra $A^{\mathfrak{p}}$ can be used to model the
corresponding Specht module (for the symmetric group or Hecke algebra)
via the action of some exact functors on $A^{\mathfrak{p}}\mathrm{-mod}$,
see \cite{KMS}. The algebra $A^{\mathfrak{p}}$ has a simple preserving
duality, which yields that all indecomposable projective
$A^{\mathfrak{p}}$-modules are self-dual. Since the trace form on
$A^{\mathfrak{p}}$ respects grading, it follows that this form induces 
a nondegenerate pairing between the components of the grading 
filtration of $A^{\mathfrak{p}}$ as required in the formulation of
Theorem~\ref{Csymm}. Thus, from Theorem~\ref{Csymm} it follows that 
the quasi-hereditary envelope $\mathfrak{C}(A^{\mathfrak{p}})$ of
$A^{\mathfrak{p}}$ is symmetric and thus $A^{\mathfrak{p}}$ is a 
centralizer subalgebra of a symmetric quasi-hereditary algebra.
It would be interesting to understand the algebra 
$\mathfrak{C}(A^{\mathfrak{p}})$. Note that the natural grading filtration
on $A^{\mathfrak{p}}$ does not have to coincide with the radical 
filtration. 

In the special case $\mathfrak{g}=\mathfrak{sl}_2$ and
$\mathfrak{p}=\mathfrak{h}\oplus \mathfrak{n}_+$, the algebra 
$\mathfrak{C}(A^{\mathfrak{p}})$ is closely related to the
algebras from \cite{MT1}.
}
\end{example}

\vspace{1cm}

\noindent
Volodymyr Mazorchuk, Department of Mathematics, Uppsala University,
Box 480, 751 06, Uppsala, SWEDEN, {\tt mazor\symbol{64}math.uu.se}\\
http://www.math.uu.se/$\tilde{\hspace{1mm}}$mazor/.
\vspace{0.8cm}

\noindent
Vanessa Miemietz, Mathematical Institute, University of Oxford,
24-29 St Giles, Oxford OX1 3LB, UK, {\tt miemietz\symbol{64}maths.ox.ac.uk}\\
http://people.maths.ox.ac.uk/miemietz/.

\end{document}